\documentclass[a4paper,12pt,oneside]{article}
\usepackage{hyperref}
\usepackage[T1]{fontenc}
\usepackage[a4paper,left=2.5cm,right=2.5cm]{geometry}

\usepackage{authblk} 
\usepackage{mathtools}
\usepackage{mathrsfs}
\usepackage{amssymb}
\usepackage{xifthen}
\usepackage{tikz}
\usepackage{amsmath}
\usepackage{amsthm}
\usepackage{mathrsfs}
\usepackage{latexsym}
\usepackage[all]{xy}
\usepackage{epsfig}
\usepackage{amscd,mathrsfs,graphicx,mathrsfs}

\usepackage{amsthm,amsmath,amsfonts,amssymb}
\usepackage{xcolor}
   \usepackage{geometry}

\newgeometry{vmargin={28mm}, hmargin={20mm,17mm}}   

\makeatletter

\hypersetup{
    colorlinks,
    linkcolor={red},
    citecolor={green},
    urlcolor={blue}
}

\newtheorem{theorem}{Theorem}
\newtheorem{lem}[theorem]{Lemma}
\newtheorem{cor}[theorem]{Corollary}
\newtheorem{conj}[theorem]{Conjecture}
\newtheorem{prop}[theorem]{Proposition}
\newtheorem{rem}[theorem]{Remark}
\newtheorem{defn}[theorem]{Definition}

\newtheorem{theoremx}{Theorem}
\newtheorem{corx}[theoremx]{Corollary}

\newtheorem*{theorem*}{Theorem}

\usepackage{verbatim}
\usepackage{xcolor}
\usepackage{authblk}
\hypersetup{
    colorlinks,
    linkcolor={orange},
    citecolor={magenta},
    urlcolor={blue}
}
\usepackage{etoolbox}
\makeatletter
\patchcmd{\@startsection}
  {\@afterindentfalse}
  {\@afterindentfalse\@afterheading}
  {}{}
\makeatother
\title{On the classification of perfect Prishchepov groups}

\author[1]{Layla Sorkatti\thanks{Email: \texttt{layla.sorkatti@gmail.com}}}
\author[2]{Ihechukwu Chinyere\thanks{Corresponding author: \texttt{i.chinyere@up.ac.za; ihechukwu@aims.ac.za}}}
\affil[1]{Department of Pure Mathematics, University of Khartoum and Al Neelain University, Khartoum, Sudan.}
\affil[2]{Department of Mathematics and Applied Mathematics, University of Pretoria, Hatfield 0028, Pretoria, South Africa.}

\begin{document}
\maketitle
\begin{abstract}
The Prishchepov groups $P(r,n,k,s,q)$ form a broad class of cyclically presented groups. We verify a conjectural characterisation of the perfect groups in this family. We first prove the conjecture for the case $\gcd(n,6)=1$ and then establish further cases beyond this coprimality condition. Consequently, we obtain a classification of perfect Prishchepov groups in a broad range of parameters.

\vspace{1em}
\noindent\textbf{Keywords:} Prishchepov group; cyclically presented group; circulant matrix; perfect group, roots of unity, differentiation.

\noindent\textbf{MSC (2020):} 20F05, 11C08, 11C20, 15B36, 11R18, 26A24.
\end{abstract}

\section{Introduction}

Let \(n \geq 1\) be a natural number, and let \(F_n\) denote the free group of rank \(n\) with generators \(x_0, x_1, \ldots, x_{n-1}\). Define the automorphism \(\theta \in \operatorname{Aut}(F_n)\) by  
\[
\theta(x_i) = x_{(i+1) \bmod n}, \quad 0 \leq i < n.
\]
A \emph{cyclic presentation} is a group presentation of the form
\[
P_n(w) = \langle x_0, x_1, \ldots, x_{n-1} \mid w, \theta(w), \ldots, \theta^{n-1}(w) \rangle,
\]
where \(w \in F_n\), is the defining relator. The group defined by the presentation \(P_n(w)\) is denoted \(G_n(w)\). A group is said to be \emph{cyclically presented} if it is isomorphic to \(G_n(w)\) for some word \(w\) and natural number \(n\).  

\medskip

Among the many families of cyclically presented groups, one of the most influential is the family introduced by Prishchepov \cite{prishchepov1995aspherisity}, denoted \(P(r,n,k,s,q)\), where \(n,k,r,s,q \ge 1\) are integers. This family admits a cyclic presentation \(P_n(w)\) with defining word
\[
w = \left(\prod_{i=0}^{r-1} x_{iq}\right) \left(\prod_{i=0}^{s-1} x_{k-1+iq}\right)^{-1}.
\]
Prishchepov’s formulation unifies and extends several earlier constructions. For example, the following subfamilies arise as special cases:

\begin{itemize}
    \item the Fibonacci groups \(F(2,n) = P(2,n,3,1,1)\) \cite{conway1965advanced},
    \item the family \(G_n(m,k) = P(2,n,k+1,1,m)\) \cite{MR1634446,johnson1975some},
    \item the family \(H(r,n,s) = P(r,n,r+1,s,1)\) \cite{campbell1975class},
    \item the Gilbert--Howie groups \(H(n,m) = P(2,n,2,1,m)\) \cite{GHow}, and
    \item the Sieradski groups \(S(r,n) = P(r,n,2,r-1,2)\) \cite{MR830041,cavicchioli1998geometric}.
\end{itemize}
 Consequently, the Prishchepov groups provide a unifying framework for studying many families of cyclically presented groups, while simultaneously extending and generalising results previously obtained for those families.
  
\medskip
A central theme in the theory of cyclically presented groups is the study of algebraic properties of \(G_n(w)\) as \(n\) varies. One property of particular interest is \emph{perfectness}, that is, the vanishing of the abelianisation. This problem has been investigated for many subfamilies, including \(H(r,n,s)\) \cite{MR4315549,MR4418964,williams2019generalized}, \(H(n,m)\) \cite{Odoni,GHow}, and \(G_n(m,k)\) \cite{MR2488144, Will1}, as well as for the Prishchepov groups \cite{MR4315549}. Other work has focused on structural properties such as the finiteness of abelianisations \cite{MR1967243}, within larger families of cyclically presented groups in which the Prishchepov family appears as a subfamily.
  
\medskip
Perfectness is almost obvious in certain cases, leading to the following definition.
\begin{defn}[Trivially perfect]\label{def:tperf}
The group \(P(r,n,k,s,q)\) is \emph{trivially perfect} if $|r-s|=1$ and any of the following hold modulo \(n\):
\[
k \equiv 1,\quad
k \equiv 1+q,\quad
\gcd(q,n)\,r \equiv 0,\quad
\gcd(q,n)\,s \equiv 0.
\]
\end{defn}
The fact that trivially perfect groups are perfect follows from \cite[Theorem 17(b)]{MR2946300} and Proposition \ref{tperfect}.

\medskip
Closely related to perfectness is the question of triviality. Since every trivial group is necessarily perfect, characterising perfect cyclically presented groups provides a natural first step toward identifying trivial ones. The literature on cyclic presentations of the trivial group is extensive, spanning combinatorial, geometric, and algebraic approaches; see, for instance, \cite{MR1837678,MR3223773,MR2043998,MR4365040,prishchepov1995aspherisity,williams2019generalized,MR3010816}. In particular, Problem 1.1 of \cite{MR3223773} asks which cyclically presented groups are trivial.

\medskip
To make progress with the analysis of the perfect Prishchepov groups, it is convenient to isolate a subclass defined by certain congruence conditions.

\begin{defn}[type $\widetilde{\mathfrak{Z}}$, cf.~\cite{MR4315549}]\label{def:typeZtilde}
The group \(P(r,n,k,s,q)\) is said to be of \emph{type \(\widetilde{\mathfrak{Z}}\)} if one of the following congruences holds modulo \(n\):
\begin{enumerate}
    \item \textbf{type \(\mathfrak{Z}\):}
    \(\; q(r-s) \equiv 2(k-1) \pmod{n};\)
    \item \textbf{type \(\mathfrak{Z}'\):}
    \(\; q(r+s) \equiv 0 \pmod{n}.\)
\end{enumerate}
\end{defn}

The type \(\mathfrak{Z}\) condition was first identified in \cite{MR4365040} in connection with topological applications, though it has implicitly appeared in earlier work. In general, for groups of type \(\widetilde{\mathfrak{Z}}\), perfectness is characterised by the following result.

\begin{theorem}[{\cite[Theorem C]{MR4315549}}]\label{ThmC}
Let \(n \geq 2,\; k,q \geq 1\), and \(r \geq s \geq 1\) with \(\gcd(n,k-1,q)=1\).  
If \(P(r,n,k,s,q)\) is of type \(\widetilde{\mathfrak{Z}}\), then the following are equivalent:
\begin{enumerate}
\item \(P(r,n,k,s,q)\) is perfect;
\item \(|r-s|=1,\; \gcd(n,q)=1,\) and \(\gcd(k-1-qr,n)=1.\)
\end{enumerate}
\end{theorem}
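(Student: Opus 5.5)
The plan is to use the standard determinant description of the abelianisation. $P(r,n,k,s,q)^{\mathrm{ab}}$ is presented by the circulant matrix whose representer polynomial is
\[
f(t)=\sum_{i=0}^{r-1}t^{iq}-\sum_{i=0}^{s-1}t^{k-1+iq}.
\]
Hence the group is perfect if and only if $\bigl|\prod_{\lambda^n=1}f(\lambda)\bigr|=1$. Each factor $f(\lambda)$ is an algebraic integer, and the product over a full Galois orbit is a rational integer. So perfectness is equivalent to every orbit-product $\prod_{\lambda\ \text{primitive } d\text{th root}} f(\lambda)$, for $d\mid n$, being $\pm1$.

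I would extract the three conditions of (2) in turn, and then show that they are also sufficient.

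\emph{First condition.} Taking $\lambda=1$ gives $f(1)=r-s$, so $|r-s|=1$ is necessary. Since $r\ge s$, this means $r=s+1$.

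\emph{Second condition.} Next consider $\lambda\neq1$ with $\lambda^n=\lambda^q=1$. Then $f(\lambda)=r-s\lambda^{k-1}$. The hypothesis $\gcd(n,k-1,q)=1$ forces $\zeta:=\lambda^{k-1}\neq1$. Therefore $\operatorname{Re}(r-s\zeta)=r-s\operatorname{Re}\zeta>r-s=1$, so every Galois conjugate of $f(\lambda)$ has modulus $>1$ and the orbit-product is not $\pm1$. Hence no such $\lambda$ exists, i.e.\ $\gcd(n,q)=1$.

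\emph{Reduction to $q=1$.} With $q$ invertible modulo $n$, the map $\lambda\mapsto\lambda^q$ permutes the $n$th roots of unity. Writing $k-1\equiv q\kappa \pmod n$, the whole problem becomes the case $q=1$ with $k-1$ replaced by $\kappa$. The type conditions and the quantity $k-1-qr=q(\kappa-r)$ transform compatibly, so it suffices to treat $q=1$.

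\emph{Factorisation using the type condition.} For $\lambda\neq1$ we have
\[
(\lambda-1)f(\lambda)=\lambda^{r}-1-\lambda^{k-1}(\lambda^{s}-1).
\]
Under type $\mathfrak{Z}$, $\lambda^{2(k-1)}=\lambda^{r-s}$, so $\lambda^r=\lambda^{s+2(k-1)}$ and the right-hand side factors as
\[
(\lambda^{k-1}-1)(\lambda^{k-1+s}+1).
\]
Under type $\mathfrak{Z}'$, $\lambda^r=\lambda^{-s}$, and it factors as
\[
-\lambda^{-s}(\lambda^{s}-1)(\lambda^{k-1+s}+1).
\]
Using $r=s+1$ together with the type condition, the exponent $k-1+s$ is congruent modulo $n$ to a unit multiple of $k-1-r$ or related quantities. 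This is how $\gcd(k-1-r,n)$ enters.

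\emph{Third condition and sufficiency.} The orbit-products are now ratios of norms of elements $\mu-1$ and $\mu+1=(\mu^2-1)/(\mu-1)$ with $\mu$ a root of unity. For these I use the classical fact: for $\zeta$ a primitive $d$th root of unity, $N(1-\zeta)$ is $0$ if $d=1$, $p$ if $d=p^a$, and $1$ otherwise. Counting the prime-power orders that occur shows the following. The product is $\pm1$ exactly when no factor vanishes and the prime-power contributions of numerator and denominator cancel. The vanishing of a factor $\lambda^{k-1+s}+1$, or a surviving factor $2$ or $p$, occurs precisely when $\gcd(k-1-r,n)>1$. Conversely, if $\gcd(k-1-r,n)=1$, the numerator and denominator norms match orbit by orbit.

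The main obstacle is this last bookkeeping step. One must handle the roots where $\lambda^{k-1}=1$ or $\lambda^{s}=1$ and the numerator factor vanishes, so that $f(\lambda)$ has to be evaluated directly rather than through the quotient. One must also track the $2$-power orders where $\mu+1$ has norm $2$, checking that these cancel exactly against the denominator $N(\lambda-1)$. I would organise this by fixing a prime $p\mid n$ and computing the $p$-adic valuation of $\prod f(\lambda)$ over $\lambda$ of $p$-power order, treating $p=2$ separately.
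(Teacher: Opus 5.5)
The paper does not prove this statement; it quotes it as Theorem~C of \cite{MR4315549}, so there is no in-paper argument to compare with.

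Your preliminary steps are sound. The value $f(1)=r-s$ gives $r=s+1$. The estimate $\operatorname{Re}(r-s\lambda^{k-1})>1$ at $\lambda\neq1$ with $\lambda^{q}=1$ correctly yields $\gcd(n,q)=1$, without even using the type hypothesis. The substitution $\mu=\lambda^{q}$ correctly reduces to $q=1$ with $\kappa$ in place of $k-1$, and both factorisations of $(\lambda-1)f(\lambda)$ are right.

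The gap is the final step, which is where $\gcd(k-1-qr,n)=1$ has to be earned. You state the outcome of a prime-by-prime norm count without carrying it out. The obstacles you foresee do not actually arise: there are no roots with $\lambda^{\kappa}=1$ or $\lambda^{s}=1$, and no $2$-power orders whose norm-$2$ factors cancel against $N(\lambda-1)$. Meanwhile the point that really decides the matter is not identified.

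The missing observation is as follows. With $r=s+1$ and $q=1$, type $\mathfrak Z$ reads $2\kappa\equiv1$ and type $\mathfrak Z'$ reads $2s\equiv-1\pmod n$. Hence $n$ is odd, and $\kappa$ (respectively $s$) is a unit modulo $n$. So for every $n$th root of unity $\lambda\neq1$, the quotient $(\lambda^{\kappa}-1)/(\lambda-1)$ (respectively $(\lambda^{s}-1)/(\lambda-1)$) is a cyclotomic unit.

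Moreover, $(\kappa+s)+(\kappa-r)=2\kappa-(r-s)\equiv0$ in type $\mathfrak Z$, and $(\kappa+s)-(\kappa-r)=r+s\equiv0$ in type $\mathfrak Z'$. Therefore $\mu:=\lambda^{\kappa+s}=\lambda^{\mp(\kappa-r)}$, and $\mu$ has odd order. If $\mu\neq1$, then $1+\mu=(1-\mu^{2})/(1-\mu)$ is a unit, because $\mu^{2}$ has the same order as $\mu$. If $\mu=1$, then $1+\mu=2$.

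This settles both directions:
\begin{itemize}
\item[] \emph{Sufficiency.} $f(\lambda)$ is a unit for all $\lambda\neq1$ exactly when $\lambda^{\kappa-r}\neq1$ for all such $\lambda$, that is, when $\gcd(\kappa-r,n)=\gcd(k-1-qr,n)=1$. Together with $f(1)=1$ this gives perfectness.
\item[] \emph{Necessity.} If $g=\gcd(\kappa-r,n)>1$, take $\lambda$ a primitive $g$th root of unity. Then $f(\lambda)=2\times(\text{unit})$, and the orbit product is $\pm2^{\varphi(g)}\neq\pm1$.
\end{itemize}

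So the factor $2$ arising at $\mu=1$ is precisely the obstruction, not something that cancels. With this inserted, your outline becomes a complete proof, and no $p$-adic bookkeeping is needed.
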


\medskip
Connections with low-dimensional topology further emphasise the significance of these groups. The Brieskorn manifold \(M(a,b,c)\) is homeomorphic to the \(c\)-fold cyclic branched covering of \(S^3\) along the torus link of type \((a,b)\), and for the Sieradski groups \(S(r,n) = P(r,n,2,r-1,2)\), one has
\[
S(r,n) \cong \pi_1(M(2,2r-1,n)).\cite{MR1634446, MR418127}
\]
By Milnor’s classification, the spherical cases occur precisely for \((a,b,c) = (2,3,3)\), \((2,3,4)\), and \((2,3,5)\). In these cases, the fundamental group \(\pi_1(M(a,b,c))\) is finite but nontrivial, namely the binary tetrahedral, binary octahedral, and binary icosahedral groups, of orders \(24\), \(48\), and \(120\), respectively. Exploiting this connection, \cite{MR4315549} established the following description of trivial groups of type \(\mathfrak{Z}\), which essentially states that the trivial groups in this class are the obvious ones.

\begin{cor}[{\cite[Corollary D]{MR4315549}}]\label{corD}
Let \(n \geq 2,\; k,q \geq 1,\; r \geq s \geq 1\) with \(\gcd(n,k-1,q)=1\).  
If \(P(r,n,k,s,q)\) is of type \(\mathfrak{Z}\), then the following are equivalent:
\begin{enumerate}
\item \(P(r,n,k,s,q)\) is trivial;
\item \(|r-s|=1\) and either \(k \equiv 1 \pmod{n}\) or \(k \equiv 1+q \pmod{n}\).
\end{enumerate}
\end{cor}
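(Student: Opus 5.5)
The implication (2)$\Rightarrow$(1) is a direct computation with the relator. Suppose first that $k\equiv 1 \pmod n$. Since $r\ge s$, the condition $|r-s|=1$ forces $r=s+1$. Then the word is
\[
w=\Bigl(\prod_{i=0}^{s-1}x_{iq}\Bigr)\,x_{sq}\,\Bigl(\prod_{i=0}^{s-1}x_{iq}\Bigr)^{-1},
\]
which is conjugate to $x_{sq}$. The relators therefore force $\theta^j(x_{sq})=1$ for every $j$, so all generators are trivial.

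Suppose instead that $k\equiv 1+q$. Then the second product is $x_q x_{2q}\cdots x_{sq}$, and the first product is $x_0$ times this same string. Hence $w=x_0$, and again the group is trivial. Neither case uses the type $\mathfrak{Z}$ hypothesis.

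For (1)$\Rightarrow$(2), a trivial group is perfect, so Theorem~\ref{ThmC} gives $r=s+1$, $\gcd(n,q)=1$ and $\gcd(k-1-qr,n)=1$. I will then show that, under the type $\mathfrak{Z}$ hypothesis, these conditions already rule out triviality. In other words, for $n\ge 2$ neither (1) nor (2) can hold, so the equivalence holds vacuously.

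That (2) fails is elementary. Type $\mathfrak{Z}$ with $r-s=1$ reads $q\equiv 2(k-1)\pmod n$. If $k\equiv 1$, this gives $q\equiv 0$. If $k\equiv 1+q$, it gives $q\equiv 2q$, i.e.\ $q\equiv 0$. Either way this contradicts $\gcd(q,n)=1$ with $n\ge2$.

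It remains to show that the group cannot be trivial, and this is the substantive step.

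Since $\gcd(q,n)=1$, I substitute $y_i=x_{iq}$, which permutes the generators compatibly with $\theta$. This gives $P(r,n,k,s,q)\cong P(s+1,n,k',s,1)$ with $q(k'-1)\equiv k-1$. The type $\mathfrak{Z}$ condition then becomes $2(k'-1)\equiv 1\pmod n$. Hence $n$ is odd and $m:=k'-1\equiv (n+1)/2$, so the relator is
\[
y_0y_1\cdots y_s=y_my_{m+1}\cdots y_{m+s-1}.
\]

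Since $2m\equiv1$, the map $i\mapsto 2i$ is a bijection of $\mathbb{Z}_n$. Writing $y_i=z_{2i}$, the relator becomes
\[
z_0z_2\cdots z_{2s}=z_1z_3\cdots z_{2s-1}.
\]
This is exactly the Sieradski presentation $S(r,n)=P(r,n,2,r-1,2)$ with $r=s+1\ge2$. One should check that this reindexing is compatible with the shift $\theta$; it is, because the multiplication maps commute with the cyclic shift up to a power of it.

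Consequently $P(r,n,k,s,q)\cong S(r,n)\cong\pi_1(M(2,2r-1,n))$. The main obstacle is to prove that this group is nontrivial for all $r\ge2$ and $n\ge2$. I would argue via Milnor's classification of Brieskorn manifolds. In the spherical cases $(2,3,3),(2,3,5)$ (only odd $n$ occur here), the fundamental group is the nontrivial binary tetrahedral or binary icosahedral group. In all remaining cases $M(2,2r-1,n)$ carries Euclidean, $\widetilde{SL_2}$ or Nil geometry, so its fundamental group is infinite. In particular it is never trivial.

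A purely algebraic fallback would be to exhibit a nontrivial finite quotient directly, for example into a binary polyhedral group or $SL_2$ of a suitable field. I would use this only if the topological citation is deemed insufficient.
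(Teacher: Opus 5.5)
Your proof is correct and follows the route the paper attributes to \cite{MR4315549}. A trivial group is perfect, so Theorem~\ref{ThmC} gives $r=s+1$ and $\gcd(n,q)=1$; reindexing then turns type $\mathfrak{Z}$ into $2(k'-1)\equiv 1\pmod n$ and identifies the group with $S(r,n)\cong\pi_1(M(2,2r-1,n))$ for odd $n$, which Milnor's classification shows is never trivial, so neither (1) nor (2) can hold and the equivalence is vacuous. None of the following small corrections affects the conclusion: to show that (2) fails unconditionally, use $\gcd(n,k-1,q)=1$ (both cases force $n\mid q$ and $n\mid k-1$), since you only have $\gcd(q,n)=1$ under (1); the spherical cases for odd $n$ also include $(2,5,3)$, i.e.\ $r=n=3$, which is homeomorphic to $M(2,3,5)$; $\pi_1(M(2,3,3))\cong Q_8$, not the binary tetrahedral group; and for odd $n$ every non-spherical case is $\widetilde{\mathrm{SL}}_2$, so no Euclidean or Nil cases arise.
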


\begin{rem}
We mention an overlooked application of the existing theory in \cite{MR4315549}. 
Both examples below arise from the group
\[
E_n(t)=\langle x,t\mid t^n=1,\;w(x,t)=1\rangle,
\]
with $n=3$, the only difference being the choice of the word $w(x,t)$. For the group labelled (O100) in \cite{EdjvetSwan_IrreducibleCyclicallyPresented}, 
we have
\[
w(x,t)=xtxtxtxt^{-2}x^{-1}t^2x^{-1}t^{-1}x^{-1}t^{-2}.
\]
Rewriting the presentation in terms of $x_i=t^ixt^{-i}$ gives
$P(4,3,3,3,1)\cong S(4,3),$
so the group is of type $\mathfrak{Z}$ and is nontrivial by
Corollary~\ref{corD}. 

\medskip
For the group labelled (O29) in \cite{EdjvetSwan_IrreducibleCyclicallyPresented}, nontriviality follows immediately since the exponent sum of \(x\) in the defining word $w(x,t)=(x^2t)^2x^2(tx)^{-4}$ is \(2\), rather than \(\pm 1\).

\end{rem}
We call $P(r,n,k,s,q)$ \emph{irreducible} if $\gcd(n,k-1,q)=1$. The type $\widetilde{\mathfrak{Z}}$ condition appears to capture an essential structural feature of irreducible perfect groups. Indeed, every known nontrivial perfect group $P(r,n,k,r-1,q)$ with $1\leq r<n$ that is irreducible satisfies the type $\widetilde{\mathfrak{Z}}$ condition. This observation motivates the following conjecture.
\begin{conj}[{\cite[Conjecture 1.3]{MR4315549}}]\label{conj}
Let \(n,k,q \geq 1\) and \(2 \leq r < n\) with \(\gcd(n,k-1,q)=1\).  
Suppose \(k \not\equiv 1 \pmod{n}\) and \(k \not\equiv 1+q \pmod{n}\).  
If \(P(r,n,k,r-1,q)\) is perfect, then it is of type \(\widetilde{\mathfrak{Z}}\).
\end{conj}
We are now ready to present the main classification theorem for the perfect groups \(P(r,n,k,s,q)\) in the case where \(\gcd(n,6)=1\), thereby proving Conjecture \ref{conj} under this coprimality condition.
\begin{theoremx}\label{main}
Let \(r,n,k,s,q\geq1\) with \(\gcd(n,6)=1\) and \(r\geq s\). If \(P(r,n,k,s,q)\) is perfect, then it is either trivially perfect or of type \(\widetilde{\mathfrak Z}\).
\end{theoremx}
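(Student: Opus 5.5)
The plan is to work entirely with the abelianisation, which is governed by the circulant relation matrix. Abelianising $P_n(w)$ gives $\mathbb{Z}^n$ modulo the circulant with representer polynomial
\[
f(t)=\sum_{i=0}^{r-1}t^{iq}-t^{k-1}\sum_{i=0}^{s-1}t^{iq}\in\mathbb{Z}[t]/(t^n-1).
\]
By the standard resultant formula, $|G_n(w)^{ab}|=\bigl|\prod_{\lambda^n=1}f(\lambda)\bigr|$ when this product is nonzero, and the abelianisation is infinite otherwise. So perfectness is equivalent to
\[
\prod_{\lambda^n=1}f(\lambda)=\pm1 .
\]
This forces $f(\lambda)$ to be a unit in $\mathbb{Z}[\lambda]$ for every $n$th root of unity $\lambda$.

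The first steps are routine. Taking $\lambda=1$ gives $f(1)=r-s$, so $|r-s|=1$, i.e. $s=r-1$. Next I would handle the degenerate cases in which one of the two geometric sums collapses modulo $t^n-1$. This happens when $\gcd(q,n)r\equiv0$ or $\gcd(q,n)s\equiv0 \pmod n$, or when $k\equiv1$ or $k\equiv1+q$, where the two sums telescope. In each of these cases the group is trivially perfect by definition, so I would assume none of them holds.

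I would then reduce to $\gcd(q,n)=1$. If $d=\gcd(q,n)>1$, take $\lambda$ a primitive $d$th root of unity, so that $\lambda^q=1$. Then $f(\lambda)=r-\lambda^{k-1}(r-1)$, and this must be a unit. The norm of $r-\zeta(r-1)$ for a root of unity $\zeta\ne1$ of order $m$ is $\Phi_m$-type and exceeds $1$ unless $\zeta=1$ or $r\le1$. This gives a contradiction unless $\lambda^{k-1}=1$, which can be arranged for all such $\lambda$ only in excluded cases (irreducibility or trivial perfectness). With $\gcd(q,n)=1$, I would substitute $t\mapsto t^{q^{-1}}$, a Galois automorphism, and so reduce to $q=1$ with $k-1$ replaced by $m\equiv(k-1)q^{-1}$. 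Multiplying by $(t-1)$ gives
\[
(t-1)f(t)=t^r-1-t^m\bigl(t^{r-1}-1\bigr)=t^r-t^{m+r-1}+t^m-1,
\]
a $4$-term polynomial. The problem becomes: for which $r,m$ is
\[
g(\lambda)=\lambda^r-\lambda^{m+r-1}+\lambda^m-1
\]
equal to $(\lambda-1)\cdot(\text{unit})$ for every nontrivial $n$th root of unity $\lambda$?

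The heart of the argument is the main obstacle. I expect it to go through units of the form $\lambda^a+\lambda^b-\lambda^c-1$ and their vanishing or modulus constraints. My first attempt would use a prime $p\mid n$, which has $p\ge5$ since $\gcd(n,6)=1$. Work modulo the prime above $p$ in $\mathbb{Z}[\zeta_p]$, and also compare $|f(\lambda)|$ across Galois conjugates. The product over all conjugates of $|f(\zeta)|^2$ must equal $1$. Averaging $\log|f|$ via Jensen/Mahler-measure gives a sum of nonnegative-mean terms unless $f(\zeta)$ has constant modulus $1$ under all conjugates.

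Constant modulus $|g(\zeta^j)|=|\zeta^j-1|$ for all $j$ coprime to $p$ produces trigonometric identities. Expanding $|g|^2$ as a Laurent polynomial in $\zeta$ and requiring it to match $|t-1|^2=2-t-t^{-1}$ on all primitive $p$th roots yields exponent coincidences modulo $p$. Here $p\ge5$ guarantees that no accidental relations among $1,\zeta,\ldots$ occur, since the $p$th roots of unity are linearly independent apart from the single full-sum relation, which needs $p$ terms. This is presumably where the paper's "differentiation" enters: derivative conditions detect multiplicities when exponents coincide. The resulting coincidences should be exactly $2m\equiv r-(r-1)$, i.e. type $\mathfrak{Z}$, or $r+(r-1)\equiv0$, i.e. type $\mathfrak{Z}'$, modulo each $p$. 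I would then lift to modulo $n$ by running the same argument with primitive $n$th roots, or with roots of each divisor, using CRT.

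The delicate step is justifying that a cyclotomic unit of this shape has modulus $1$ at all conjugates. I would try to get this from a Kronecker-type argument, or from the explicit $4$-term structure, rather than from generic unit theory.
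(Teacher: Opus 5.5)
\textbf{There is a genuine gap: the central step of your argument rests on a false claim.}

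A unit of $\mathbb{Z}[\zeta]$ need not have modulus $1$ at its conjugates. Averaging $\log|f|$ over the $n$th roots of unity is a Riemann sum, not Jensen's integral, so it gives no nonnegativity.

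A concrete counterexample is $P(2,5,4,1,1)$. It is perfect, it is not trivially perfect, and it is of type $\mathfrak Z$, since $2(k-1)=6\equiv 1=q(r-s) \pmod 5$. Here $f(t)=1+t-t^3$. With $\zeta=e^{2\pi i/5}$ one has $|f(\zeta)|=\varphi^2\approx 2.618$ and $|f(\zeta^2)|=\varphi^{-2}$, where $\varphi$ is the golden ratio; their squares multiply to $1$, which is why the norm is $1$. So the constant-modulus identity $|g(\zeta^j)|=|\zeta^j-1|$ that you want to extract fails for exactly the groups the theorem must allow. No Kronecker-type argument can establish it, and the exponent coincidences you derive from it have no foundation.

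\textbf{What Kronecker does give.} The paper uses Odoni's criterion, which concerns the ratio $F(\zeta^{-1})/F(\zeta)=\overline{F(\zeta)}/F(\zeta)$ rather than $F(\zeta)$ itself. This ratio is a unit all of whose conjugates have modulus $1$, because complex conjugation commutes with the abelian Galois group. Hence it is a root of unity, and $\epsilon\zeta^jF(\zeta)=F(\zeta^{-1})$ for some $j$ and some $\epsilon=\pm1$.

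\textbf{How the argument then closes.} Clearing the factor $1-\zeta$ turns this into an equality between two sums of four $n$th roots of unity. The hypothesis $\gcd(n,6)=1$ is what lets you apply the Galois automorphisms $\zeta\mapsto\zeta^i$ for $i=1,2,3,4$. Then the first four power sums of the two multisets agree, so by Newton--Girard the exponent multisets coincide modulo $n$. A short matching argument gives $2r\equiv1$ or $2m\equiv1 \pmod n$, and shows the case $\epsilon=-1$ is impossible.

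\textbf{Further problems with your plan.}
\begin{enumerate}
\item The Newton--Girard argument works directly modulo $n$; your prime-by-prime plan would not. Distinct primes dividing $n$ could a priori satisfy different alternatives, and CRT cannot recombine those conclusions. Nor does a prime-by-prime argument cover prime powers.
\item Your reduction to $\gcd(q,n)=1$ treats the case $d=\gcd(q,n)>1$ with $d\mid k-1$ as excluded. The theorem has no irreducibility hypothesis, so this case is not excluded. It must be handled by writing $f(t)=h(t^d)$, which is equivalent to the free-product decomposition into $d$ copies of $P(r,N,K,r-1,1)$. The congruences obtained there must then be transferred back to the original parameters.
\end{enumerate}
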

\medskip
Suppose that \(w, w'\in F_n\) have the same exponent sum for each generator \(x_0,\dots,x_{n-1}\) (equivalently, they define the same element in the abelianisation of \(F_n\)). Then \(G_n(w)\) is perfect if and only if \(G_n(w')\) is perfect. The following result follows from Theorem~\ref{ThmC} and Theorem ~\ref{main}.

\begin{corx}\label{maincor}
Let \(r,n, k,s,q \ge 1\) with \(\gcd(n,6)=1\) and \(r\ge s\). Let \(G_n(w)\) be a cyclically presented group, and suppose there is a Prishchepov group \(P(r,n,k,s,q)=G_n(w')\) such that \(w=w'\) in \(F_n^{\mathrm{ab}}\). Then \(G_n(w)\) is perfect if and only if \(s=r-1\) and one of the following holds:
\begin{enumerate}
    \item \(P(r,n,k,s,q)\) is trivially perfect;
    \item \(P(r,n,k,s,q)\) is of type \(\widetilde{\mathfrak{Z}}\) and
    \(\gcd(k-1-qr,n)=\gcd(q,n)\).
\end{enumerate}
\end{corx}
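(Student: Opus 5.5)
The plan is to reduce everything to the abelianisation, and then to Theorem~\ref{main} and Theorem~\ref{ThmC}. Since $w=w'$ in $F_n^{\mathrm{ab}}$, the relation matrices of $P_n(w)$ and $P_n(w')$ over $\mathbb{Z}$ coincide. So $G_n(w)^{\mathrm{ab}}\cong P(r,n,k,s,q)^{\mathrm{ab}}$, and it suffices to prove the statement for $G=P(r,n,k,s,q)$ itself.

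\emph{Necessity of $s=r-1$.} Identify all generators, $x_i\mapsto x$. This gives a surjection of $G^{\mathrm{ab}}$ onto $\langle x\mid x^{r-s}\rangle\cong\mathbb{Z}_{|r-s|}$. So perfectness forces $|r-s|=1$, and with $r\ge s$ this means $s=r-1$.

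\emph{Reduction to the irreducible case.} Put $d=\gcd(n,k-1,q)$. Every generator occurring in $w$ has index $\equiv 0 \pmod d$. The relators $\theta^j(w)$ therefore split according to $j \bmod d$, and $G$ is the free product of $d$ copies of
\[
P\bigl(r,\,n/d,\,(k-1)/d+1,\,s,\,q/d\bigr).
\]
Thus $G$ is perfect if and only if this smaller group $P'$ is perfect. The parameters of $P'$ satisfy $\gcd\bigl(n/d,(k-1)/d,q/d\bigr)=1$, so $P'$ is irreducible. Dividing the congruences in Definition~\ref{def:typeZtilde} by $d$ shows that $G$ is of type $\widetilde{\mathfrak Z}$ if and only if $P'$ is. The conditions in Definition~\ref{def:tperf} transfer in the same way.

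\emph{Forward direction.} Assume $G$ is perfect, so $s=r-1$. By Theorem~\ref{main}, which applies since $\gcd(n,6)=1$, $G$ is either trivially perfect or of type $\widetilde{\mathfrak Z}$. In the latter case:
\begin{itemize}
\item If $n/d\ge 2$, apply Theorem~\ref{ThmC} to the irreducible group $P'$. It gives $\gcd(n/d,q/d)=1$ and $\gcd\bigl((k-1-qr)/d,\,n/d\bigr)=1$. Multiplying by $d$ yields $\gcd(q,n)=d=\gcd(k-1-qr,n)$.
\item If $n/d=1$, then $d=n$ divides both $q$ and $k-1-qr$, so both gcds equal $n$.
\end{itemize}

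\emph{Converse.} Assume $s=r-1$. If $G$ is trivially perfect, it is perfect by the remark after Definition~\ref{def:tperf}. Otherwise $G$ is of type $\widetilde{\mathfrak Z}$ with $\gcd(k-1-qr,n)=\gcd(q,n)=:e$. Then $e$ divides $qr$ and $k-1-qr$, hence divides $k-1$, so $e=d$. For the reduced group $P'$ this gives $\gcd(n/d,q/d)=1$ and $\gcd\bigl((k-1-qr)/d,n/d\bigr)=1$.
\begin{itemize}
\item If $n/d\ge 2$, Theorem~\ref{ThmC} shows that $P'$ is perfect.
\item If $n/d=1$, then $P'$ has abelianisation $\mathbb{Z}_{r-s}=1$.
\end{itemize}
In either case $G$ is perfect.

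The only delicate point is the free-product decomposition in the reducible case. It requires checking that the relators really separate into the $d$ residue classes of generators, and that the type $\widetilde{\mathfrak Z}$ congruences and the gcd condition rescale exactly by $d$, including the degenerate case $n/d=1$. The rest is bookkeeping.
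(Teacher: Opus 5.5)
\textbf{The gap.} The step that fails is the first line of your converse: \emph{if $G$ is trivially perfect, it is perfect by the remark after Definition~\ref{def:tperf}}. That remark rests on Proposition~\ref{tperfect}, whose standing hypothesis $d=\gcd(n,q)$ means $\gcd(n,q)\mid k-1$. Definition~\ref{def:tperf} does not impose this. The conditions $k\equiv1$ and $k\equiv1+q$ force it, but $\gcd(q,n)r\equiv0$ and $\gcd(q,n)s\equiv0$ do not.

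For instance, $P(2,5,2,1,5)$ satisfies every hypothesis of the statement and has $s=r-1$. Since $\gcd(q,n)s=5\equiv0\pmod 5$, it is trivially perfect in the sense of the definition. Yet reducing indices modulo $5$ gives $w=x_0^2x_1^{-1}$, so $x_{i+1}=x_i^2$ and the group is $\mathbb{Z}_{31}$ (equivalently $\det C=\prod_{i=0}^{4}(2-\zeta^i)=2^5-1$). If you insist on $q<n$, $P(5,25,2,4,5)$ works the same way: $f(\zeta_5)=5-4\zeta_5$ has norm $5^5-4^5=2101$. So the \emph{if} direction of the statement is false as written. The paper's own proof makes exactly the same appeal to Proposition~\ref{tperfect}, so this is a defect of the statement rather than of your strategy, but your argument cannot get past it.

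\textbf{The repair.} Add $\gcd(n,q)\mid k-1$ to case~(1). This costs nothing, since the condition is necessary for perfectness. Suppose a prime power $p^a$ divides $\gcd(n,q)$ but not $k-1$, and let $\omega$ be a primitive $p^a$-th root of unity. Then the associated polynomial takes the value $r-(r-1)\omega^{k-1}$ with $\omega^{k-1}\neq1$. Every conjugate of this value has modulus $>1$, so $C$ is not unimodular by Lemma~\ref{Primitive}. With this hypothesis, in your notation $d=\gcd(n,k-1,q)=\gcd(n,q)$, so $P'$ has $\gcd(q/d,n/d)=1$. The two $\gcd(q,n)$-conditions then become $r\equiv0$ or $r\equiv1\pmod{n/d}$. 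Proposition~\ref{tperfect} together with \eqref{decom} gives perfectness.

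\textbf{The rest.} Everything else in your proposal is sound and follows the paper's route: Theorem~\ref{main} for the dichotomy and Theorem~\ref{ThmC} for the gcd condition. Your explicit reduction to the irreducible $P'$, and the observation that the gcd hypothesis in case~(2) forces $\gcd(q,n)=d$, are more careful than the paper. The paper's converse applies Theorem~\ref{ThmC} directly to $G_n(w)$ without checking $\gcd(n,k-1,q)=1$.
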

The condition \(\gcd(n,6)=1\) excludes the cases in which \(n\) is divisible by \(2\) or \(3\). However, for fixed \(r\) and \(k\), trivial perfectness allows one to lift trivial perfectness from \(P(r,n/2,k,r-1,1)\) to \(P(r,n,k,r-1,1)\) whenever \(n\) is even and \(P(r,n,k,r-1,1)\) is perfect. Consequently, by repeatedly factoring out powers of \(2\), the study of perfectness for groups with \(\gcd(n,6)>1\) reduces to the corresponding odd component of \(n\).
\begin{theoremx}\label{main2}
Let \(r,n,m,M,k,s,q\geq1\) with \(n=2^mM\), where \(\gcd(M,6)=1\), and suppose that \(r\geq s\). If \(P(r,n,k,s,q)\) is perfect, then one of the following holds:
\begin{enumerate}
    \item \(P(r,n,k,s,q)\) is trivially perfect;
    \item \(P(r,M,k,s,q)\) is of type \(\widetilde{\mathfrak Z}\).
\end{enumerate}
\end{theoremx}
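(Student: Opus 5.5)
The strategy is to reduce Theorem~\ref{main2} to Theorem~\ref{main} by passing from \(n\) to a divisor of \(n\). The key tool is the relation matrix of the abelianisation. For \(P(r,n,k,s,q)\) it is the circulant matrix with representer polynomial
\[
f(t)=\sum_{i=0}^{r-1} t^{iq}-\sum_{i=0}^{s-1} t^{k-1+iq}.
\]
By the standard circulant/resultant formula, \(G_n(w)^{\mathrm{ab}}\) is trivial if and only if \(\prod_{\lambda^n=1} f(\lambda)=\pm1\). Equivalently, for every prime \(p\), \(f\) has no root among the \(n\)-th roots of unity in \(\overline{\mathbb F}_p\).

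Now let \(d\mid n\). Every \(d\)-th root of unity is an \(n\)-th root of unity. So if \(P(r,n,k,s,q)\) is perfect, the product over \(\lambda^d=1\) also has no root mod any prime. Since \(f\) only depends on exponents mod \(d\) when evaluated at \(d\)-th roots, this product is exactly the resultant for \(P(r,d,k,s,q)\), with parameters read mod \(d\). Hence \(P(r,d,k,s,q)\) is perfect.

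Apply this with \(d=M\), the part of \(n\) coprime to \(6\). First dispose of the degenerate cases \(M=1\) and \(r\) versus \(M\) separately. For \(M=1\), type \(\mathfrak Z'\) holds trivially because every congruence mod \(1\) holds, so conclusion 2 is automatic. For general \(M\), note that Theorem~\ref{main} as stated only needs \(r\ge s\) and \(\gcd(M,6)=1\), with no bound \(r<n\). So Theorem~\ref{main} applied to the perfect group \(P(r,M,k,s,q)\) gives two possibilities: either it is of type \(\widetilde{\mathfrak Z}\), which is conclusion 2, or it is trivially perfect modulo \(M\).

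Finally, treat the case where \(P(r,M,k,s,q)\) is trivially perfect. Here I aim to show that either \(P(r,n,k,s,q)\) is itself trivially perfect, or \(P(r,M,k,s,q)\) is also of type \(\widetilde{\mathfrak Z}\). Perfectness already forces \(|r-s|=1\), since \(f(1)=r-s\) must be \(\pm1\). So only the congruence conditions need lifting from \(M\) to \(n\).

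For this I use perfectness at the remaining roots of unity: \(\lambda=-1\), and the primitive \(2^j\)-th and \(3\)-power-order roots dividing \(n\). Take, for instance, the case \(k\equiv 1 \pmod M\) but \(k\not\equiv 1\pmod n\). Then \(f(t)=\sum_{i<r}t^{iq}-\sum_{i<r-1}t^{k-1+iq}\). At a root of unity \(\lambda\) of \(2\)- or \(3\)-power order with \(\lambda^{k-1}\ne1\), I would evaluate \(f(\lambda)\) in characteristic \(2\) and \(3\), reducing to cyclotomic computations, to exhibit a common root mod \(2\) or \(3\). Such a root contradicts perfectness unless the congruence lifts or a type \(\mathfrak Z\)/\(\mathfrak Z'\) congruence holds mod \(M\). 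The same analysis applies to \(k\equiv1+q\), and to \(\gcd(q,M)r\equiv0\) and \(\gcd(q,M)s\equiv0\).

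The main obstacle is this lifting step. The \(2\)-part is the hint given before the theorem, namely repeatedly halving \(n\). The \(3\)-part is less clear, since the statement writes \(n=2^mM\) and thus seems to assume \(3\nmid n\). I would first check whether \(3\nmid n\) is implicitly intended, and otherwise handle \(3\) by the same resultant-mod-\(3\) argument.
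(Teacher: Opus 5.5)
\textbf{There is a genuine gap, in the lifting step.}

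Your first reduction matches the paper's: perfectness passes from \(n\) to the divisor \(M\), and then Theorem~\ref{main} applies. Your worry about the prime \(3\) is moot, since \(n=2^mM\) with \(\gcd(M,6)=1\) already forces \(3\nmid n\). The gap is the lifting step, which is the whole content of the theorem beyond Theorem~\ref{main}, and the mechanism you propose for it cannot work.

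Characteristic \(2\) never detects anything. We have \(x^{n}-1\equiv (x^{M}-1)^{2^m}\pmod 2\), so the \(n\)-th roots of unity in \(\overline{\mathbb F}_2\) are just the \(M\)-th roots of unity. Hence \(R_n(f)\equiv R_M(f)^{2^m}\equiv 1\pmod 2\) as soon as \(P(r,M,k,s,q)\) is perfect.

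Characteristic \(3\), or any other prescribed prime, does not work uniformly either. Take \(P(4,10,6,3,1)\), so \(M=5\).
\begin{itemize}
\item Since \(k\equiv 1\pmod 5\), the group \(P(4,5,6,3,1)\) is trivially perfect. It is not of type \(\widetilde{\mathfrak Z}\).
\item \(P(4,10,6,3,1)\) is not trivially perfect, so this is exactly a case your lifting must exclude.
\item At the smaller roots, \(F(1)=F(-1)=1\) and \(F(\zeta_5)=\zeta_5^{3}\).
\item At a primitive \(10\)th root \(\zeta=-\omega\), one gets \(F(\zeta)=2-2\omega+2\omega^2-\omega^3\), whose norm is \((16-7\sqrt5)(16+7\sqrt5)=11\).
\end{itemize}
So \(|R_{10}(F)|=11\), and \(F\) has no root among the \(10\)th roots of unity modulo \(2\) or modulo \(3\). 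The obstructing prime depends on the parameters, so no computation modulo a fixed prime can give a uniform lifting argument.

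The missing idea is an argument over \(\mathbb C\) rather than modulo a prime. The paper supplies this in Theorem~\ref{double}, applied \(m\) times to climb from \(M\) to \(n\). Before that, it normalises to \(q=1\), \(s=r-1\) via \eqref{decom} and Lemma~\ref{al}. It then uses the involution \((k,r)\mapsto(r+1,k-1)\) to reduce the \(r\)-conditions to \(k\equiv 1,2\pmod M\). The argument runs as follows.
\begin{itemize}
\item If the congruence fails to lift from \(M\) to \(2M\), then \(k\equiv u+aM\pmod{2M}\) with \(u\in\{1,2\}\) and \(a\) odd.
\item Perfectness makes \(F(\zeta)\) a unit at a primitive \(2M\)-th root \(\zeta\), and Lemma~\ref{OdoniCriterion} gives \(F(\zeta)=\zeta^jF(\zeta^{-1})\).
\item Because \(\zeta^{k-1}=-\zeta^{u-1}\), this becomes the identity of Lemma~\ref{upart}.
\item Taking absolute values turns that identity into the sine equation \(2AB_uE_u=CD\).
\item Lemma~\ref{noj} rules this out for every \(j\), using monotonicity of a ratio of sines and Lemma~\ref{lem:sin-ineq}; the cases \(r\le 2\) or \(n<8\) are checked directly.
\end{itemize}
Without some such estimate comparing \(|F(\zeta)|\) with \(|F(\zeta^{-1})|\), your proof leaves open the case where \(P(r,M,k,s,q)\) is trivially perfect but \(P(r,n,k,s,q)\) is not.
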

An immediate consequence of Theorem~\ref{main2} is the following corollary, which provides a complete characterisation of perfectness whenever \(n\) has no odd prime divisors other than \(3\). This is consistent with Conjecture~\ref{conj}, which implies that the same conclusion holds for all even values of \(n\), assuming $P(r,n,k,s,q)$ is irreducible.
\begin{corx}\label{main2cor}
Let \(n=2^a3^b\), where \(a\geq0\) and \(b\in\{0,1\}\), and suppose that \(r\geq s\). Then \(P(r,n,k,s,q)\) is perfect if and only if it is trivially perfect.
\end{corx}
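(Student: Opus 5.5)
The plan is an induction on $a$, reducing to the base cases $n=1$ and $n=3$. The key tool is a pair of facts already mentioned in the discussion before Theorem~\ref{main2}: perfectness passes to a quotient, and trivial perfectness lifts from $n/2$ to $n$.

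\emph{The easy direction.} If $P(r,n,k,s,q)$ is trivially perfect, then it is perfect by the remark after Definition~\ref{def:tperf}, that is, \cite[Theorem 17(b)]{MR2946300} together with Proposition~\ref{tperfect}.

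\emph{The converse, step 1: reduction to the circulant determinant.} Perfectness is equivalent to the relation matrix of the abelianisation being unimodular. That matrix is circulant with representer polynomial
\[
f(t)=\sum_{i=0}^{r-1}t^{iq}-t^{k-1}\sum_{i=0}^{s-1}t^{iq}.
\]
So $P(r,n,k,s,q)$ is perfect if and only if $f(\zeta)$ is a unit for every $n$-th root of unity $\zeta$. Taking $\zeta=1$ already forces $r-s=\pm1$, hence $s=r-1$ since $r\ge s$.

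\emph{Step 2: passage to $n/2$.} For even $n$, the map $x_i\mapsto x_{i \bmod n/2}$ sends the relators of $P_n(w)$ to relators of the presentation with $n/2$ generators. Hence $P(r,n/2,k,r-1,q)$ is a quotient of $P(r,n,k,r-1,q)$, and so it is perfect too; equivalently, the $(n/2)$-th roots of unity are among the $n$-th roots. By induction $P(r,n/2,k,r-1,q)$ is trivially perfect. I then invoke the lifting of trivial perfectness from $n/2$ to $n$ described before Theorem~\ref{main2}, which gives that $P(r,n,k,r-1,q)$ is trivially perfect.

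\emph{The main obstacle.} This lifting step is where I expect the difficulty. The paper states it for $q=1$, so I would have to verify it for general $q$ directly. The congruences $k\equiv1$ and $k\equiv1+q$ do not automatically lift from modulus $n/2$ to modulus $n$. My plan here is to use perfectness of the level-$n$ group: a primitive $n$-th root $\zeta$ must make $f(\zeta)$ a unit. I would show that if, say, $k\equiv 1+n/2 \pmod n$, then $f(\zeta)$ becomes $S_r(\zeta^q)+S_{r-1}(\zeta^q)$ up to sign. Here $S_j(x)=1+x+\dots+x^{j-1}$ denotes the partial geometric sum, evaluated at $x=\zeta^q$. Taking norms, this sum is a non-unit except when $\gcd(q,n)r\equiv0$ or $\gcd(q,n)(r-1)\equiv0$, and these are again trivially perfect conditions.

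\emph{Base cases.} For $n=1$ the group is $\langle x\mid x^{r-s}\rangle$. It is perfect if and only if $r-s=1$, and every congruence modulo $1$ holds, so it is trivially perfect. For $n=3$: if $3\mid q$, then $\gcd(q,3)r\equiv0$ and the group is trivially perfect. Otherwise $k-1$ is congruent modulo $3$ to one of $0$, $q$ or $2q$. The first two are trivially perfect cases. In the remaining case $k-1\equiv-q$, one has
\[
f(\omega)=S_r(\omega^q)-\omega^{-q}S_{r-1}(\omega^q),
\]
where $\omega$ is a primitive cube root of unity. I would compute this for $r\equiv 0,1,2\pmod 3$ and check that it is a unit only when $3\mid r$ or $3\mid r-1$. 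Both of these are trivially perfect conditions.

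Combining the base cases with the inductive step covers all $n=2^a3^b$ with $b\in\{0,1\}$.
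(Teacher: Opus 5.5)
Your skeleton (pass to the quotient at level $n/2$, induct on $a$, base cases $n=1,3$) follows the paper's route through Theorem~\ref{double} and Corollary~\ref{lcor}, and your $n=3$ computation is correct. The gap is the lifting step. You rightly flag it as the obstacle, but ``taking norms'' does not prove it, and it carries the entire content of the corollary.

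Write $x=\zeta^q$, a primitive $m$-th root of unity with $m=n/\gcd(n,q)$; odd $m\in\{1,3\}$ is covered by your base-case computation. You must then show that $(2-x^{r-1}-x^{r})/(1-x)$ is not a unit of $\mathbb{Z}[x]$ unless $m\mid r$ or $m\mid r-1$. The statement is true: after normalisation it is what the proof of Theorem~\ref{double} establishes. But nothing about the norm makes it visible.
\begin{itemize}
\item For $m=2^c$ with $c\ge2$ and $r\not\equiv0,1\pmod m$, the numerator has valuation exactly $1$ at the prime $(1-x)$, the same as the denominator. So there is no $2$-adic obstruction.
\item For $m=12,24,\dots$ the denominator $1-x$ is itself a unit. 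You would have to prove directly that $2-x^{r-1}-x^{r}$ is not a cyclotomic unit, and there is no closed form for its norm and no evident bound excluding $\pm1$.
\end{itemize}
Checking $m=4,6,8$ by hand agrees with your claim but is not an argument.

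Moreover, you only treat $k\equiv1+n/2$. You also need $k\equiv1+q+n/2\pmod n$, where $f(\zeta)=1+2(x+\dots+x^{r-1})$. You also need the two $r$-conditions, which likewise need not lift (e.g.\ $q=1$, $r\equiv n/2\pmod n$). For $q\ne1$ these cannot be converted into $k$-conditions by the involution $(k,r)\mapsto(r+1,k-1)$ until you normalise via \eqref{decom} and Lemma~\ref{al}, as the paper does.

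The missing idea is to control the \emph{argument} of $f(\zeta)$ rather than its size. Since $f(\zeta^{-1})=\overline{f(\zeta)}$, if $f(\zeta)$ is a unit then $\overline{f(\zeta)}/f(\zeta)$ is a unit all of whose conjugates have modulus $1$, hence a root of unity. This is Lemma~\ref{OdoniCriterion}, and for even $n$ Remark~\ref{epsil} gives the exact identity $F(\zeta)=\zeta^jF(\zeta^{-1})$ for some $j$. The paper then proceeds in three steps:
\begin{itemize}
\item When $k\equiv u+n/2$ with $u\in\{1,2\}$, Lemma~\ref{upart} rewrites this identity.
\item Taking absolute values gives \eqref{feqn}.
\item Lemma~\ref{noj} rules out every $j$ by monotonicity and the sine estimate of Lemma~\ref{lem:sin-ineq} when $n\ge8$ and $r\ge3$; the remaining small cases are checked directly.
\end{itemize}
Without an argument of this kind, or some other genuine proof that these particular cyclotomic integers are non-units, your inductive step is unsupported.
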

In the Prishchepov setting, additional structure is often needed to identify trivial groups among the perfect ones.
\subsection*{Structure}
The remainder of this article is organised as follows. In Section~\ref{sec:Method}, we describe the method used in determining the abelianisation of $G_n(w)$. Section~\ref{sec:Odd} is devoted to the technical results required for the proof of Theorem~\ref {main}, while Section~\ref{sec:Even} treats the corresponding technical results for Theorem~\ref {main2}. Finally, in Section~\ref{sec:Main}, we include a reduction to a normalised case and a lifting argument to recover the general case, as well as give a proof of the main results.

\section{Abelianisation}\label{sec:Method}

This section develops the algebraic framework used to determine when a cyclically presented group is perfect. We first explain how to compute the abelianisation of a cyclically presented group.  The computation naturally leads to circulant matrices and their associated polynomials. We then relate these polynomials to the defining presentations of the groups \(P(r,n,k,s,q)\).

\medskip
To determine whether \(G_n(w)\) is perfect, we study its abelianisation.
Let \(c_i\) denote the exponent sum of \(x_i\) in the defining word \(w\),
for \(0\leq i<n\). The relation matrix of \(G_n(w)^{\mathrm{ab}}\) is the
circulant matrix
\[
C=\operatorname{circ}_n(c_0,c_1,\ldots,c_{n-1}).
\]
The group \(G_n(w)\) is perfect if and only if \(C\) is unimodular, while
\(G_n(w)^{\mathrm{ab}}\) is infinite if and only if \(C\) is singular. Associated with \(C\) is the circulant polynomial
\[
f_C(t)=\sum_{i=0}^{n-1}c_it^i.
\]
By \cite[Equation 3.2.14]{MR543191} and
\cite[Theorem 3, p.~78]{johnson1980topics},
\[
\det(C)=R_n(f_C)=\prod_{i=0}^{n-1}f_C(\zeta^i),
\]
where \(\zeta\) is a primitive \(n\)th root of unity. Hence \(C\) is
unimodular precisely when \(R_n(f_C)=\pm1\), and singular precisely when \(R_n(f_C)=0\).

\medskip
For the Prishchepov family \(P(r,n,k,s,q)\), the associated polynomial is
\[
f(t)=
\sum_{i=0}^{r-1}t^{qi}
-
t^{k-1}\sum_{i=0}^{s-1}t^{qi}.
\]
By the symmetry and reduction results in Section \ref{normal}, the study of perfectness may
be reduced to the normalised case \(s=r-1\) and \(q=1\). The corresponding associated polynomial for this reduced case is
\begin{equation}\label{eq2}
F(t)=
\sum_{i=0}^{r-1}t^i
-
t^{k-1}\sum_{i=0}^{r-2}t^i .
\end{equation}
We shall also use the polynomial
\[
G(t)=
\sum_{i=0}^{k-2}t^i
-
t^r\sum_{i=0}^{k-3}t^i ,
\]
obtained from \(F(t)\) by the involution
\((k,r)\mapsto(r+1,k-1)\). By
\cite[Lemma~2.6]{MR4315549}, the corresponding resultants agree.

\section{Technical results for the case $\gcd(n,6)=1$}\label{sec:Odd}
In this section, we collect some of the technical tools needed to prove Theorem \ref{main}. Fix an integer $n \geq  1$, and let $F(t)$ denote the polynomial defined in~(\ref{eq2}) with integer parameters $r \geq 2$ and $k \geq 1$. 
Let $\zeta$ be a primitive $n$th root of unity. 
Our goal is to analyse solutions to the equation
\begin{equation}\label{eq4}
 \epsilon\zeta^j F(\zeta) \;=\; F(\zeta^{-1}),
\end{equation}
for some $0 \leq j < n$, and $\epsilon=\pm1$.

\medskip
We first recall a classical fact that allows us to identify multisets of complex numbers from their power sums. 

\begin{lem}[Newton--Girard]\label{N-G}
Let $w_1,\dots,w_\ell, z_1,\dots,z_\ell \in \mathbb{C}$. If
\[
\sum_{i=1}^{\ell} w_i^j = \sum_{i=1}^{\ell} z_i^j \quad \text{for all } j=1,\dots,\ell,
\]
then the multisets $\{w_1,\dots,w_\ell\}$ and $\{z_1,\dots,z_\ell\}$ are equal.
\end{lem}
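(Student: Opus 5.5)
The plan is to pass from power sums to the elementary symmetric polynomials, since these determine the monic polynomial whose roots are the given numbers. Put
\[
p_j=\sum_{i=1}^{\ell} w_i^j,\qquad q_j=\sum_{i=1}^{\ell} z_i^j,
\]
and let \(e_m(w)\) and \(e_m(z)\) denote the \(m\)th elementary symmetric polynomials in \(w_1,\dots,w_\ell\) and \(z_1,\dots,z_\ell\), with \(e_0=1\). By hypothesis \(p_j=q_j\) for \(1\le j\le \ell\). We will show that \(e_m(w)=e_m(z)\) for all \(0\le m\le \ell\), and then compare the polynomials
\[
P_w(t)=\prod_{i=1}^{\ell}(t-w_i),\qquad P_z(t)=\prod_{i=1}^{\ell}(t-z_i).
\]

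The key tool is Newton's identities. For \(1\le m\le \ell\) and for any \(\ell\)-tuple of complex numbers,
\[
m\,e_m=\sum_{i=1}^{m}(-1)^{i-1}e_{m-i}\,p_i .
\]
If these need to be justified rather than quoted, take the logarithmic derivative of \(E(t)=\prod_i(1+w_it)=\sum_m e_mt^m\). This gives
\[
tE'(t)=E(t)\sum_{j\ge1}(-1)^{j-1}p_jt^j,
\]
and comparing coefficients of \(t^m\) yields the identity. We then argue by induction on \(m\). The base case is \(e_0(w)=e_0(z)=1\). In the inductive step, the right-hand side of the identity involves only \(e_0,\dots,e_{m-1}\) and \(p_1,\dots,p_m\), and \(m\le \ell\). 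Since we are in characteristic zero we can divide by \(m\), so \(e_m(w)=e_m(z)\).

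It follows that
\[
P_w(t)=\sum_{m=0}^{\ell}(-1)^m e_m t^{\ell-m}=P_z(t).
\]
By unique factorisation in \(\mathbb{C}[t]\), the roots of this common polynomial, counted with multiplicity, form a single well-defined multiset. Hence \(\{w_1,\dots,w_\ell\}=\{z_1,\dots,z_\ell\}\) as multisets.

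There is no real obstacle here. The only points that need care are that the range \(j=1,\dots,\ell\) is exactly the range of power sums the induction uses, and that division by \(m\) is legitimate over \(\mathbb{C}\). The argument would fail in positive characteristic.
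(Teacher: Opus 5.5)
Your proof is correct and takes the same route the paper indicates. The paper gives no formal proof, only the remark that the power sums determine the elementary symmetric polynomials via the Newton--Girard identities, and hence determine the multiset of roots. Your induction on Newton's identities, followed by comparing $P_w(t)=P_z(t)$, fills in exactly that sketch.
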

Intuitively, the sums of powers determine the elementary symmetric polynomials via the Newton--Girard identities, so equality of the first $\ell$ power sums implies equality of the corresponding multisets of roots.

\medskip
To our knowledge, the first explicit use of Lemma \ref{N-G} in the context of cyclically presented groups is due to Odoni \cite{Odoni}, who applied it to a trinomial. A similar application to a trinomial was later given in \cite{Will1}. The present work extends these earlier applications by applying the lemma to a polynomial of arbitrary degree, rather than to a trinomial.

\medskip
Next, we state a criterion for the unimodularity of circulant matrices in terms of cyclotomic units. 

\begin{lem}[\cite{MR2398785}, Lemma 3 and Corollary]\label{Primitive}
Let $C$ be the $n\times n$ circulant matrix with first row $(a_0,\dots,a_{n-1})$ and associated polynomial
\(
p(t) = a_0 + a_1 t + \cdots + a_{n-1} t^{\,n-1}.
\)
Then $C$ is unimodular if and only if 
\(
p(\zeta_d) \in \mathbb{Z}[\zeta_d]^\times\) for every divisor $d$ of $n$, where $\zeta_d$ is a primitive $d$th root of unity.
\end{lem}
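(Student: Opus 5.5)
The plan is to factor the resultant formula $\det(C)=\prod_{i=0}^{n-1}p(\zeta^i)$ from Section~\ref{sec:Method} by the exact order of each root of unity, and then to recognise each grouped factor as a field norm. Every $n$th root of unity $\zeta^i$ is a primitive $d$th root of unity for exactly one divisor $d$ of $n$. Conversely, for each $d\mid n$ all $\varphi(d)$ primitive $d$th roots occur among the $\zeta^i$. Hence
\[
\det(C)=\prod_{d\mid n}\ \prod_{\substack{1\le a\le d\\ \gcd(a,d)=1}} p(\zeta_d^{\,a}) .
\]
The embeddings of $\mathbb{Q}(\zeta_d)$ into $\mathbb{C}$ are exactly $\zeta_d\mapsto\zeta_d^{a}$ with $\gcd(a,d)=1$, because the cyclotomic polynomial $\Phi_d$ is irreducible. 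Since $p$ has integer coefficients, the inner product is therefore
\[
N_d := N_{\mathbb{Q}(\zeta_d)/\mathbb{Q}}\bigl(p(\zeta_d)\bigr).
\]
This gives $\det(C)=\prod_{d\mid n}N_d$.

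Each $p(\zeta_d)$ is an algebraic integer lying in $\mathbb{Z}[\zeta_d]$, so each $N_d$ is a rational integer. A product of integers equals $\pm1$ if and only if every factor equals $\pm1$. Therefore $C$ is unimodular, that is $\det(C)=\pm1$, if and only if $N_d=\pm1$ for every $d\mid n$. It remains to show that, for $\alpha\in\mathbb{Z}[\zeta_d]$, we have $N(\alpha)=\pm1$ if and only if $\alpha\in\mathbb{Z}[\zeta_d]^\times$.

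\emph{Units have norm $\pm1$.} Suppose $\alpha\beta=1$ with $\beta\in\mathbb{Z}[\zeta_d]$. Multiplicativity of the norm gives $N(\alpha)N(\beta)=1$, and both factors are integers, so $N(\alpha)=\pm1$.

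\emph{Norm $\pm1$ gives a unit.} Suppose $N(\alpha)=\pm1$ and set $\gamma=\prod_{a\neq1}\sigma_a(\alpha)$, where $\sigma_a$ runs over the non-identity embeddings. Then $\alpha\gamma=\pm1$, so $\gamma=\pm\alpha^{-1}$ lies in $\mathbb{Q}(\zeta_d)$. At the same time $\gamma$ is a product of algebraic integers, so it is an algebraic integer. It follows that $\gamma$ lies in the ring of integers of $\mathbb{Q}(\zeta_d)$.

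The main point needing care is the classical theorem that this ring of integers is exactly $\mathbb{Z}[\zeta_d]$. I would cite it rather than reprove it. With it, $\alpha^{-1}=\pm\gamma\in\mathbb{Z}[\zeta_d]$, so $\alpha$ is a unit. Applying this equivalence to $\alpha=p(\zeta_d)$ for each $d\mid n$ completes the argument.
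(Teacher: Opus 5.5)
Your argument is correct and complete. The paper gives no proof of this lemma: it simply quotes Lemma~3 and its Corollary from \cite{MR2398785}, so there is no internal argument to compare against.

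What you have written is the standard self-contained proof. You regroup the eigenvalues $p(\zeta^i)$ by exact order to get $\det(C)=\prod_{d\mid n}N_{\mathbb{Q}(\zeta_d)/\mathbb{Q}}\bigl(p(\zeta_d)\bigr)$. Each factor is a rational integer, so the product is $\pm1$ exactly when every factor is. You then use that an element of $\mathbb{Z}[\zeta_d]$ is a unit if and only if its norm is $\pm1$.

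One simplification is available: you do not need the theorem that $\mathbb{Z}[\zeta_d]$ is the full ring of integers of $\mathbb{Q}(\zeta_d)$. If $\alpha=g(\zeta_d)$ with $g\in\mathbb{Z}[t]$, then every conjugate $\sigma_a(\alpha)=g(\zeta_d^{a})$ already lies in $\mathbb{Z}[\zeta_d]$. Hence $\gamma=\prod_{a\neq1}\sigma_a(\alpha)\in\mathbb{Z}[\zeta_d]$ directly, and $\alpha\gamma=\pm1$ exhibits the inverse. The only nontrivial input left is then the irreducibility of $\Phi_d$, which you use to identify the inner products with norms.
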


The key link to our setting comes from a result of Odoni, which shows that if a polynomial evaluates to a cyclotomic unit, then its values at $\zeta$ and $\zeta^{-1}$ are related exactly as in~\eqref{eq4}.

\begin{lem}[\cite{Odoni}, proof of Lemma~3.1]\label{OdoniCriterion}
Let $n > 1$ be an integer, $\zeta$ a primitive $n$th root of unity, and $g(t)$ a polynomial. 
If $g(\zeta)$ is a unit in $\mathbb{Z}[\zeta]$, then there exists an integer $j$ and $\epsilon \in \{\pm 1\}$ such that  
\(
\epsilon\zeta^j g(\zeta) = g(\zeta^{-1}).
\)
\end{lem}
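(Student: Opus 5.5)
We write $u=g(\zeta)$ and compare $u$ with its complex conjugate. We take $g(t)\in\mathbb{Z}[t]$, as in all applications in this paper, so that $u\in\mathbb{Z}[\zeta]$.

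\emph{Step 1: conjugation.} The field $\mathbb{Q}(\zeta)$ is a Galois extension of $\mathbb{Q}$ with abelian Galois group $\{\sigma_a:\zeta\mapsto\zeta^a,\ \gcd(a,n)=1\}$. Complex conjugation restricts to $\sigma_{-1}$. Because the coefficients of $g$ are rational, $\sigma_{-1}(u)=g(\zeta^{-1})$. Since $u$ is a unit of $\mathbb{Z}[\zeta]$, so is $\sigma_{-1}(u)$, and hence
\[
\eta:=\frac{g(\zeta^{-1})}{g(\zeta)}=\frac{\sigma_{-1}(u)}{u}
\]
is a unit of $\mathbb{Z}[\zeta]$. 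In particular $\eta$ is an algebraic integer.

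\emph{Step 2: all conjugates of $\eta$ have modulus $1$.} Every complex embedding of $\mathbb{Q}(\zeta)$ is $\sigma_a$ for some $a$. Because the Galois group is abelian, $\sigma_a$ commutes with $\sigma_{-1}$, so
\[
\sigma_a(\eta)=\frac{\sigma_{-1}(\sigma_a(u))}{\sigma_a(u)}=\frac{\overline{\sigma_a(u)}}{\sigma_a(u)} .
\]
Here $\sigma_a(u)\neq 0$ because $u$ is a unit. Hence $|\sigma_a(\eta)|=1$ for every $a$.

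\emph{Step 3: Kronecker's theorem.} An algebraic integer all of whose Galois conjugates lie on the unit circle is a root of unity. The argument is standard: the powers $\eta^m$ are algebraic integers of bounded degree whose minimal polynomials have bounded integer coefficients. There are therefore only finitely many such polynomials, so $\eta^m=\eta^{m'}$ for some $m\neq m'$. We conclude that $\eta$ is a root of unity lying in $\mathbb{Q}(\zeta)$.

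\emph{Step 4: roots of unity in $\mathbb{Q}(\zeta)$.} The roots of unity in $\mathbb{Q}(\zeta)$ form the group $\mu_{\operatorname{lcm}(2,n)}$, and every element of this group has the form $\pm\zeta^j$. One justification is that if $\mu_N\subseteq\mathbb{Q}(\zeta)$, then $\varphi(N)\le\varphi(n)$ together with $\mathbb{Q}(\zeta_N)\subseteq\mathbb{Q}(\zeta_n)$ forces $N\mid\operatorname{lcm}(2,n)$. So $\eta=\epsilon\zeta^j$ with $\epsilon=\pm1$ and $0\le j<n$, which rearranges to $\epsilon\zeta^jg(\zeta)=g(\zeta^{-1})$, as required.

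The main obstacle is the arithmetic input in Steps 3 and 4, namely Kronecker's theorem and the identification of the roots of unity in a cyclotomic field. Both are classical, and I would cite them rather than reprove them. The only point specific to this setting is the commutation $\sigma_a\sigma_{-1}=\sigma_{-1}\sigma_a$ used in Step 2, which holds because the Galois group is abelian.
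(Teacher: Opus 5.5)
Your proof is correct and follows essentially the same route as the source the paper relies on. The paper gives no proof of its own and cites Odoni, whose argument is the standard one you reproduce: $g(\zeta^{-1})/g(\zeta)$ is a unit of $\mathbb{Z}[\zeta]$ with every conjugate of modulus $1$, so it is a root of unity by Kronecker's theorem and hence equals $\pm\zeta^j$. Your restriction to $g\in\mathbb{Z}[t]$ (rational coefficients would also do) is genuinely needed, since for coefficients outside $\mathbb{Q}$ the statement can fail.

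One small tightening in Step 4: the degree comparison must be made for $L=\operatorname{lcm}(N,n)$, not for $N$. Since $\mu_L\subseteq\mathbb{Q}(\zeta)$ and $n\mid L$, you get $\varphi(L)=\varphi(n)$, which forces $L\mid\operatorname{lcm}(2,n)$. The inequality $\varphi(N)\le\varphi(n)$ alone does not force $N\mid\operatorname{lcm}(2,n)$; for example, $n=7$ and $N=3$ satisfy it.
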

\begin{rem}\label{epsil}
 When $n$ is even, $\epsilon\zeta^j$ ranges over the same set of $n$th roots of unity as $\zeta^j$. Therefore, the factor $\epsilon$ may be omitted in Lemma~\ref{OdoniCriterion}.   
\end{rem}
We now combine these ingredients.  In particular, we apply Lemma~\ref{N-G} in the case $\ell = 4$, with all $w_i, z_i$ taken to be $n$th roots of unity. 
This application is valid when $\gcd(n,6) = 1$, since in that case the maps $x \mapsto x^i$ for $i = 1, \dots, 4$ induce automorphisms of the cyclotomic field $\mathbb{Q}(\zeta)$. 
Under these conditions, equation~\eqref{eq4} imposes strong arithmetic restrictions, leading to the following key lemma.

\begin{lem}\label{mainlem}
Let $n,k\geq 1$ and $r\geq 2$ be integers and let $F(t)$ be as in \ref{eq2}. Suppose the identity
\(\epsilon\zeta^j F(\zeta) = F(\zeta^{-1})\)
holds for some integer $0 \le j < n$ and $\epsilon \in \{\pm 1\}$, where $\zeta$ is a primitive $n$th root of unity. 
If $\gcd(n,6)=1$, $r \not\equiv 0,1 \bmod{n}$, and $k \not\equiv 1,2 \bmod{n}$, 
then either
\(
2r \equiv 1 \bmod{n}\) or \(2k \equiv 3 \bmod{n}.
\)
\end{lem}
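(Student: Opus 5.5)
The plan is to clear the denominator of $F$, which turns \eqref{eq4} into a vanishing sum of eight $n$th roots of unity, and then use Lemma~\ref{N-G} to reduce everything to a finite congruence analysis mod $n$.

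\textbf{Reduction to four roots of unity.} Put $a=r$ and $b=k-1$, so the hypotheses say $a,b\not\equiv 0,1 \pmod n$. Summing the geometric series gives
\[
(1-t)F(t)=1-t^{a}-t^{b}+t^{a+b-1}.
\]
This is not a symmetric four-term expression, because of the exponent $a+b-1$. I would therefore first recheck the normalisation. If it really is $a+b-1$, I keep it and let $c=a+b-1$ be the fourth exponent; the sign pattern is the same, with $+$ on $1$ and $t^{c}$ and $-$ on $t^{a}$ and $t^{b}$.

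Since $\zeta\neq1$, I multiply \eqref{eq4} by $1-\zeta$. I then use $1-\zeta^{-1}=-\zeta^{-1}(1-\zeta)$ to write $F(\zeta^{-1})$ in terms of $1-\zeta^{-a}-\zeta^{-b}+\zeta^{-c}$. Moving all negative terms to the other side gives an equality of sums $\sum_{i=1}^4 w_i=\sum_{i=1}^4 z_i$, in which every $w_i,z_i$ is a genuine $n$th root of unity. For $\epsilon=1$ the terms $\epsilon\zeta^{m}$, $\epsilon\zeta^{m+c}$ sit on one side; for $\epsilon=-1$ they switch sides. Here $m=j+1$ or $j-1$, according to the bookkeeping. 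Thus $\epsilon$ only decides which side each term lands on, and no $2n$th roots ever appear.

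\textbf{Applying Newton--Girard.} Since $\gcd(n,6)=1$, the maps $\sigma_i:\zeta\mapsto\zeta^{i}$ for $i=1,2,3,4$ are automorphisms of $\mathbb Q(\zeta)$. Applying $\sigma_i$ to the equality of sums gives $\sum w_i^{\,i}=\sum z_i^{\,i}$ for $i=1,\dots,4$, because each $w_i,z_i$ is a power of $\zeta$. Lemma~\ref{N-G} with $\ell=4$ then shows that the two multisets of exponents agree mod $n$. For $\epsilon=1$ this reads, schematically,
\[
\{m,\; m+c,\; -a,\; -b\}=\{m+a,\; m+b,\; 0,\; -c\}\pmod n,
\]
and for $\epsilon=-1$ the $m$-terms are swapped between the two sides.

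\textbf{Case analysis, which I expect to be the main obstacle.} There are at most $4!$ matchings for each sign, and for each one we read off a system of linear congruences in $m,a,b$. Most are killed immediately: any matching that forces $a\equiv0$, $b\equiv0$, $a\equiv1$ or $b\equiv1$ is excluded by hypothesis. I would organise the cases by what $m$ is paired with. Take $\epsilon=1$. If $m\equiv m+a$ then $a\equiv0$, which is excluded; similarly $m\equiv m+b$ is excluded. So $m\equiv0$ or $m\equiv -c$, and substituting either value reduces the rest to a $3$-element matching. I would then solve the surviving systems and check that each ends with $2a\equiv1$ or $2b\equiv1$, i.e.\ $2r\equiv1$ or $2k\equiv3 \pmod n$; divisions by $2$ and $3$ are legitimate because $\gcd(n,6)=1$.

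The delicate point is a family of ``symmetric'' matchings in which $m$ pairs with $0$ and the remainder closes up through a relation like $a+b\equiv$ const. Such a relation resembles the type $\mathfrak Z'$ condition rather than the stated conclusion. For these I would track carefully where the shift $-1$ in $c=a+b-1$ enters. It should move such solutions to $a\equiv1$ or $b\equiv1$, which are excluded, or produce $2a\equiv1$ or $2b\equiv1$. If a genuine leftover case survives, it has to be excluded by the non-degeneracy hypotheses $r,k$ stated in the lemma. Verifying this exhaustively, without missing coincident exponents (multiset multiplicities), is where I expect most of the work to lie.
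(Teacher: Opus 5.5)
Your plan is the paper's proof. You clear the denominator via $(1-t)F(t)=1-t^{r}-t^{k-1}+t^{k+r-2}$, rearrange into two four-term sums of genuine $n$th roots of unity, use $\zeta\mapsto\zeta^{i}$ for $i\le 4$ (legitimate since $\gcd(n,6)=1$) together with Lemma~\ref{N-G} to equate exponent multisets, and then case-split on the partner of the $j$-term after discarding $m\equiv m+a$ and $m\equiv m+b$. Two bookkeeping remarks: because $1-\zeta=-\zeta(1-\zeta^{-1})$, the multiset you display for $\epsilon=1$ is really the $\epsilon=-1$ case, which is harmless since you treat both signs; and your ``delicate'' symmetric matchings close outright, since in that sign every branch ends in a contradiction (e.g.\ $a+b\equiv 0$ against $a+b\equiv 1$), while in the other sign $m\equiv -a$ or $m\equiv -b$ gives exactly $2b\equiv 1$ or $2a\equiv 1$.
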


\begin{proof}
Note that the assumptions $r \not\equiv 0,1 \pmod n$ and $k \not\equiv 1,2 \pmod n$ mean that the group $P(r,n,k,r-1,1)$ is not trivially perfect. In particular, $n>1$. Our task is therefore to prove that $P(r,n,k,r-1,1)$ is of type $\widetilde{\mathfrak{Z}}$ under the additional assumption that $\gcd(n,6)=1$. Recall that
\[
F(t) = \sum_{i=0}^{r-1} t^i - t^{k-1} \sum_{i=0}^{r-2} t^i
= \frac{1 - t^{k-1} - t^r + t^{k+r-2}}{1-t}, \quad t \neq 1.
\]
Since $n > 1$, we may substitute $t = \zeta$ in \eqref{eq4}, giving
\begin{equation}\label{Id}
\epsilon \zeta^j \bigl(1 - \zeta^{k-1} - \zeta^r + \zeta^{k+r-2}\bigr)
= -\zeta \bigl(1 - \zeta^{-(k-1)} - \zeta^{-r} + \zeta^{-(k+r-2)}\bigr).
\end{equation}

For $\epsilon=+1$, equation \eqref{Id} rearranges to
\begin{equation}\label{Identity}
\zeta^j + \zeta^{j+k+r-2} + \zeta + \zeta^{-(k+r-3)} 
= \zeta^{j+k-1} + \zeta^{j+r} + \zeta^{-(k-2)} + \zeta^{1-r}.
\end{equation}
For $\epsilon=-1$, equation \eqref{Id} becomes
\begin{equation}\label{Identity2}
\zeta^j + \zeta^{j+k+r-2} + \zeta^{2-k} + \zeta^{1-r} 
= \zeta^{j+k-1} + \zeta^{j+r} + \zeta + \zeta^{2-k-r}.
\end{equation}

Since $\gcd(n,6)=1$, the four-term sums of powers on the left- and right-hand sides of 
\eqref{Identity} and \eqref{Identity2} agree up to degree $4$. Hence, by Lemma~\ref{N-G}, 
each identity reduces to the equality of the corresponding exponent multisets modulo $n$, that is,
\[
Z \equiv W \bmod{n},
\]
where $Z$ is the multiset of {exponents of the terms on the left-hand side} and 
$W$ is the multiset of {exponents of the terms on the right-hand side}.

\medskip
We now make these multisets explicit and analyse the two cases $\epsilon = \pm 1$ separately.

\subsection*{Case $\epsilon = +1$}  
The exponent multisets, considered modulo $n$, are
\[
Z = \{ j,\, j+k+r-2,\, 1,\, -(k+r-3) \}, \quad
W = \{ j+k-1,\, j+r,\, -(k-2),\, 1-r \}.
\]
The element $j \in Z$ must match some element of $W$. It cannot match $j+k-1$ (forcing $k \equiv 1 \bmod{n}$) or $j+r$ (forcing $r \equiv 0 \bmod{n}$), so the only possibilities are 
\(
j \equiv (1-r) \bmod{n}\) or \(j \equiv (2-k) \bmod{n},
\)
related by the involutive map $(k,r)\mapsto(r+1,k-1)$.  Taking $j \equiv (1-r) \bmod{n}$ gives
\[
Z = \{1-r, k-1, 1, -(k+r-3)\}\bmod n, \quad W = \{k-r, 1, -(k-2), 1-r\} \bmod{n},
\]
and canceling $1$ and $1-r$ leaves $\{k-1, -(k+r-3)\}\bmod n$ and $\{k-r, -(k-2)\}\bmod n$. These match only if 
\(
k-1 \equiv -(k-2) \bmod{n}\), which implies \( 2k \equiv 3 \bmod{n}.
\) By symmetry, the alternative $j \equiv (2-k) \bmod{n}$ leads to
\(
2r \equiv 1 \bmod{n}.
\)

\subsection*{Case $\epsilon = -1$}  
The exponent multisets, considered modulo $n$, are
\[
Z = \{ j,\, j+k+r-2,\, 2-k,\, 1-r \}, \quad
W = \{ j+k-1,\, j+r,\, 1,\, 2-k-r \} \bmod{n}.
\]
As before, we see that $j$ cannot match $j+k-1$ or $j+r$, so the candidates are $j \equiv 1 \bmod{n}$ or $j \equiv 2-k-r \bmod{n}$.  If $j \equiv 1 \bmod{n}$, then
\[
Z = \{1, k+r-1, 2-k, 1-r\}\bmod n, \quad W = \{k, r+1, 1, 2-k-r\} \bmod{n},
\]
and canceling $1$ leaves $\{k+r-1, 2-k, 1-r\}\bmod n$ and $\{k, r+1, 2-k-r\}\bmod n$, which cannot match for $n>1$.  

\medskip
If $j \equiv (2-k-r) \bmod{n}$, then
\[
Z = \{2-k-r, 0, 2-k, 1-r\}\bmod n,  \quad W = \{1, 2-k, 1, 2-k-r\} \bmod{n},
\]
and canceling $2-k-r$ and $2-k$ leaves $\{0, 1-r\}$ and $\{1,1\}$, again inconsistent.  
Hence, no solution arises in the $\epsilon=-1$ case.
\end{proof}

\section{Technical results for even case}\label{sec:Even}
Assume throughout that we are working in the context \(P(r,n,k,r-1,1)\), with n even. The purpose of this section is to develop the results needed to exclude the shifted cases \((n+2)/2\) and \((n+4)/2\), and hence show that any congruence \(k \equiv 1\) or \(2 \pmod{n/2}\) lifts to \(k \equiv 1\) or \(2 \pmod{n}\) when  \(P(r,n,k,r-1,1)\) is perfect.

\begin{lem}\label{upart}
Let \(n\ge 1\) and \(r\ge 2\) be integers, with \(n\) even, and let \(\zeta\) be a primitive \(n\)-th root of unity. Let \(F(t)\) be as defined in \eqref{eq2}, where
\(
k\equiv u+\frac{an}{2}\pmod{n},
\)
with \(1\le u\le \frac{n}{2}\) and \(a\) odd. Suppose that
\(
F(\zeta)=\zeta^{j}F(\zeta^{-1})
\)
for some integer \(0\le j\le n\). Then
\[
(1-\zeta)\bigl(\zeta^{r-1}-\zeta^{\,j-(r-1)}\bigr)
=
(1-\zeta^{r-1})
\bigl(\zeta^{\,j-(r-2)}-\zeta^{u-1}\bigr)
\bigl(1+\zeta^{-(u-1)}\bigr).
\]
\end{lem}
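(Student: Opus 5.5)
The plan is a direct computation. I will rewrite $F$ in a form that isolates the factor $1-t^{r-1}$, use the hypothesis on $k$ to turn $\zeta^{k-1}$ into $-\zeta^{u-1}$, and then clear denominators in $F(\zeta)=\zeta^jF(\zeta^{-1})$. No appeal to Lemma~\ref{N-G} or Lemma~\ref{OdoniCriterion} is needed; this is pure algebra in $\mathbb{Q}(\zeta)$.

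\textbf{Step 1 (splitting off the top term).} Separate the term $t^{r-1}$ from the first sum in \eqref{eq2}. What remains is $\sum_{i=0}^{r-2}t^i\,(1-t^{k-1})$, so for $t\neq 1$
\[
F(t)=t^{r-1}+\frac{(1-t^{k-1})(1-t^{r-1})}{1-t}.
\]
I use this form rather than the four-term numerator from the proof of Lemma~\ref{mainlem}, because the target identity contains the factor $1-\zeta^{r-1}$. Since $n$ is even we have $n\ge 2$, so $\zeta\neq1$ and the formula applies at both $\zeta$ and $\zeta^{-1}$.

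\textbf{Step 2 (using the parity shift).} Since $k-1\equiv u-1+an/2 \pmod n$ with $a$ odd, and $\zeta^{n/2}=-1$, we get $\zeta^{k-1}=-\zeta^{u-1}$ and $\zeta^{-(k-1)}=-\zeta^{-(u-1)}$. Therefore
\[
F(\zeta)=\zeta^{r-1}+\frac{(1+\zeta^{u-1})(1-\zeta^{r-1})}{1-\zeta},
\]
and similarly for $F(\zeta^{-1})$.

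\textbf{Step 3 (normalising the $\zeta^{-1}$ side).} I rewrite the terms of $F(\zeta^{-1})$ using two identities:
\[
\frac{1-\zeta^{-(r-1)}}{1-\zeta^{-1}}=\zeta^{-(r-2)}\frac{1-\zeta^{r-1}}{1-\zeta},
\qquad
1+\zeta^{u-1}=\zeta^{u-1}\bigl(1+\zeta^{-(u-1)}\bigr).
\]
The first follows by multiplying numerator and denominator by $\zeta^{r-1}$ and $\zeta$ respectively. The second puts both sides in terms of the common factor $1+\zeta^{-(u-1)}$.

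\textbf{Step 4 (clearing denominators).} Multiply $F(\zeta)=\zeta^jF(\zeta^{-1})$ by $1-\zeta$. This gives
\[
(1-\zeta)\zeta^{r-1}+\zeta^{u-1}(1+\zeta^{-(u-1)})(1-\zeta^{r-1})
=(1-\zeta)\zeta^{j-(r-1)}+\zeta^{j-(r-2)}(1+\zeta^{-(u-1)})(1-\zeta^{r-1}).
\]
Moving the $(1-\zeta)$-terms to the left and the remaining terms to the right, then factoring out $(1-\zeta^{r-1})(1+\zeta^{-(u-1)})$, yields exactly the stated identity.

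The only step requiring care is Step 1: choosing this decomposition of $F$, and keeping track of the sign and power of $\zeta$ when inverting in Step 3. Once that form is chosen, the remaining steps are mechanical.
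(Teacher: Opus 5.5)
Your proposal is correct and follows essentially the same route as the paper: the split $F(t)=t^{r-1}+S(t)(1-t^{k-1})$ with $S(t)=(1-t^{r-1})/(1-t)$, the substitution $\zeta^{\pm(k-1)}=-\zeta^{\pm(u-1)}$, the relation $S(\zeta^{-1})=\zeta^{-(r-2)}S(\zeta)$, and then eliminating $S(\zeta)$. Your Step 4 writes out explicitly the rearrangement (including $1+\zeta^{u-1}=\zeta^{u-1}(1+\zeta^{-(u-1)})$) that the paper leaves implicit, and it checks out term by term.
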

\begin{proof}
Using the geometric series formula, we write
\[
F(t)=t^{r-1}+S(t)(1-t^{k-1}), \quad 
S(t)=\frac{1-t^{r-1}}{1-t}, \quad (t\neq 1).
\]
Since $k=u+\frac{an}{2}$ with $a$ odd and $\zeta^{n/2}=-1$, we have
$\zeta^{k-1}=-\zeta^{u-1}$ and $\zeta^{-(k-1)}=-\zeta^{-(u-1)}.$
Hence
\[
F(\zeta)=\zeta^{r-1}+S(\zeta)(1+\zeta^{u-1}) ~ \text{and}~
F(\zeta^{-1})=\zeta^{-(r-1)}+\zeta^{-(r-2)}S(\zeta)(1+\zeta^{-(u-1)}).
\]
Imposing $F(\zeta)=\zeta^j F(\zeta^{-1})$ and eliminating $S(\zeta)$ yields
\[
(1-\zeta)\big(\zeta^{r-1}-\zeta^{j-(r-1)}\big)
=
(1-\zeta^{r-1})
\big(\zeta^{j-(r-2)}-\zeta^{u-1}\big)
\big(1+\zeta^{-(u-1)}\big).
\]

\end{proof}

We now present a preliminary result that is required in the proof of Lemma~\ref{noj}.

\begin{lem}\label{lem:sin-ineq}
Let integers $r, n$ satisfy $3\leq r < n$, and $n\geq 8$. Set $\alpha = \pi/n$ and $a_i:= \alpha(r + i)$ for $i\in\{-1,0,1\}$. Then, 
\(2 \sin\bigl(a_{-1}\bigr) > \sin(a_i), \, i\in \{0,1\}.
\)

\end{lem}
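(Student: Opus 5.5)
The plan is to reduce everything to the elementary fact that $t\mapsto \sin t/t$ is strictly decreasing on $(0,\pi)$. Write $x=a_{-1}=\alpha(r-1)$ and $h=\alpha(i+1)\in\{\alpha,2\alpha\}$, so that $a_i=x+h$. The claim becomes $\sin(x+h)<2\sin x$ for $h\in\{\alpha,2\alpha\}$.

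\textbf{Step 1: locate the angles.} Since $3\le r\le n-1$, we have
\[
2\alpha\le x\le (n-2)\alpha=\pi-2\alpha .
\]
In particular $0<x<\pi$, so $\sin x>0$. Also $0<x<x+h\le \pi$, and $h\le 2\alpha\le x$.

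\textbf{Step 2: the boundary case.} If $x+h=\pi$, then $\sin(x+h)=0<2\sin x$ and we are done. This happens only when $r=n-1$ and $h=2\alpha$.

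\textbf{Step 3: the main estimate.} Otherwise $0<x<x+h<\pi$. Strict monotonicity of $\sin t/t$ on $(0,\pi)$ gives
\[
\frac{\sin(x+h)}{x+h}<\frac{\sin x}{x},
\qquad\text{hence}\qquad
\sin(x+h)<\frac{x+h}{x}\,\sin x .
\]
Since $h\le x$, we have $(x+h)/x\le 2$, and $\sin x>0$, so $\sin(x+h)<2\sin x$. This settles both $i=0$ and $i=1$.

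\textbf{Step 4: the monotonicity fact.} The derivative of $\sin t/t$ is $(t\cos t-\sin t)/t^2$. Its numerator vanishes at $0$ and has derivative $-t\sin t<0$ on $(0,\pi)$, so it is negative there. Hence $\sin t/t$ is strictly decreasing on $(0,\pi)$.

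The only delicate point is ensuring $h\le x$, which is exactly where the hypothesis $r\ge 3$ enters. The endpoint $x+h=\pi$ also has to be handled separately, as in Step 2. The hypothesis $n\ge 8$ does not appear to be needed for this argument; I would note this and keep the statement as given.
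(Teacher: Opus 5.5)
Your proof is correct. It rests on the same key fact as the paper's argument: $\sin t/t$ is decreasing, combined with $a_i\le 2a_{-1}$, which is exactly where $r\ge 3$ is used. The difference is the range on which you use this fact.

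The paper only invokes monotonicity of $\sin t/t$ on $(0,\pi/2)$, so it has to split according to where $a_{-1},a_0,a_1$ sit relative to $\pi/2$:
\begin{itemize}
\item the case $a_1<\pi/2$ is your Step~3;
\item the case $a_1=\pi/2$ is done by direct evaluation using $\cos(2\alpha)\ge\sqrt{2}/2$, which is where $n\ge 8$ enters;
\item the case $a_{-1}\ge\pi/2$ uses that $\sin$ is decreasing on $[\pi/2,\pi]$;
\item the straddling case $a_{-1}<\pi/2<a_1$ is again done by direct evaluation.
\end{itemize}
You prove monotonicity on all of $(0,\pi)$, via $\frac{d}{dt}(t\cos t-\sin t)=-t\sin t$. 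This replaces the four-way split with a single estimate, and you are right that $n\ge 8$ is then unnecessary.

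Your uniform argument also avoids a slip in the paper's last case. There the paper asserts that $a_{-1}<\pi/2<a_1$ forces $r=n/2$, but this holds only for even $n$. For odd $n$ (e.g.\ $n=9$, $r\in\{4,5\}$) there are straddling configurations that the paper's case analysis does not treat. The omission is harmless for the paper's application (Lemma~\ref{noj} inside the proof of Theorem~\ref{double}), where the lemma is used with $n=2M$ even. Your version, however, proves the statement exactly as written.
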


\begin{proof}
The condition $3\leq r < n$ ensures that all angles lie in $(0,\pi]$, so sine is non-negative. Consider the possible configurations of $a_i$. 

\medskip
First, if $a_1 < \pi/2$, then each $a_i$ lies in the interval $(0, \pi/2)$. In this range, the function $\sin(x)/x$ is decreasing, which implies
\[
\frac{\sin(a_{-1})}{a_{-1}} > \frac{\sin(a_i)}{a_i}, \quad i\in\{0,1\}.
\]
Since $r\ge 3$, this gives 
\[
2\sin(a_{-1})\geq a_i\frac{\sin(a_{-1})}{a_{-1}} > a_i\frac{\sin(a_i)}{a_i}=\sin(a_i), \quad i\in\{0,1\}.
\]
\medskip
Second, in the special boundary case where $a_1 = \pi/2$, $a_{0} = \pi/2 - \alpha$ and $a_{-1} = \pi/2 - 2\alpha$, we have
\[
\sin(a_{-1}) = \sin\left(\frac{\pi}{2} - 2\alpha\right) = \cos(2\alpha),\quad \sin(a_{0}) =\cos(\alpha),
\quad \sin(a_1) = 1.
\]

Since $n \ge 8$, we have $2\alpha = {2\pi}/{n} \le {\pi}/{4}$, and hence
\[
\cos(2\alpha) \ge \cos\left(\frac{\pi}{4}\right) = \frac{\sqrt{2}}{2}.
\]

Therefore,
\[
2\sin(a_{-1}) \ge \sqrt{2} > 1 = \sin(a_1)\geq \sin(a_0).
\]

\medskip
Third, if $a_{-1}\ge \pi/2$, then $a_i$ lie in $[\pi/2, \pi]$, where the sine function is decreasing. 

\medskip
Finally, if $a_{-1} < \pi/2 < a_1$, then necessarily $r = n/2$. In this case, $\sin(a_{-1}) = \sin(a_1)=\cos(\alpha)\geq \sqrt{2}/2$, and $\sin(a_{0})=1$, so the inequality again holds. 
\end{proof}

We now present Lemma~\ref{noj}, the main result of this section, which will be used in the proof of Theorem~\ref{main2}.

\begin{lem}\label{noj}
Let integers $r, n$ satisfy $3 \le r \le n$, and $n \ge 8$. Set $\alpha = \pi/n$ and $0 \le j \le n$, $u \in \{1,2\}$. Define
$A = \sin((r-1)\alpha)$,  
$B_u = \varepsilon_{B_u}\sin((j-(r+u-3))\alpha)$,  
$C = \sin(\alpha)$,  
$D = \varepsilon_{D}\sin((j-2r+2)\alpha)$,  
and  
$E_u = \cos((u-1)\alpha)$,
where $\varepsilon_{B_u}, \varepsilon_{D} \in \{\pm 1\}$ are chosen so that $B_u, D \ge 0$. Then, for $u \in \{1,2\}$,
\(
2 A B_u E_u \neq C D.
\)
\end{lem}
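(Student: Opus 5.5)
We read the statement as the modulus version of the identity in Lemma~\ref{upart}. For $x,y$ real we have $|e^{ix}-e^{iy}| = 2|\sin((x-y)/2)|$ and $|1+e^{ix}| = 2|\cos(x/2)|$. With $\zeta=e^{2\pi i/n}$ and $\alpha=\pi/n$ this gives $|1-\zeta| = 2C$, $|\zeta^{r-1}-\zeta^{j-(r-1)}| = 2D$, $|1-\zeta^{r-1}| = 2A$, $|\zeta^{j-(r-2)}-\zeta^{u-1}| = 2B_u$ and $|1+\zeta^{-(u-1)}| = 2E_u$. So Lemma~\ref{upart} forces $4CD = 8AB_uE_u$, and the task is to show this real equation is impossible. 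All five quantities are $|\sin(m\alpha)|$ or $\cos((u-1)\alpha)$ with integer $m$, so the plan is an inequality argument. The arithmetic fact we use throughout is that $|\sin(m\alpha)|\ge \sin\alpha = C$ for every integer $m\not\equiv 0 \pmod n$.

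\emph{Degenerate case $B_u=0$.} This means $j\equiv r+u-3 \pmod n$. Then the argument of $D$ becomes $(u-1-r)\alpha$, and $3\le r\le n$ gives $1\le r+1-u\le n-1$. Hence $D\ge C>0$, while the left side $2AB_uE_u$ vanishes, so the two sides differ.

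\emph{Case $B_u\neq 0$.} Here $B_u\ge C$, and we aim to prove $2AB_uE_u > CD$. Since $E_1=1$ and $E_2=\cos\alpha\ge\cos(\pi/8)$ for $n\ge8$, the factor $E_u$ only costs a constant. Write $x=(j-(r+u-3))\alpha$ and $y=(r+1-u)\alpha$, so that $D=|\sin(x-y)|$. We then split into two ranges of $B_u$. When $B_u$ is not small, the crude bound $D\le 1$ reduces the claim to $2AE_uB_u > C$, and $A\ge C$ together with $B_u\ge C$ makes this routine except when $A$ is tiny. When $B_u$ is small, $x$ is within a few multiples of $\alpha$ of $0$ or $\pi$, so $D=|\sin(x-y)|$ is within a few multiples of $C$ of $\sin y$, that is, of $\sin(r\alpha)$ or $\sin((r+1)\alpha)$. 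In this regime we use Lemma~\ref{lem:sin-ineq}, which states $2\sin((r-1)\alpha)>\sin(r\alpha),\sin((r+1)\alpha)$, and which is tailored exactly to comparing $2A$ with these shifted sines. Concretely, we expand $\sin(x-y)=\sin x\cos y-\cos x\sin y$ to get $D\le B_u|\cos y|+|\cos x|\sin y$. The term $C|\cos x|\sin y$ is then dominated by $2A\cdot C\cdot E_u$ via Lemma~\ref{lem:sin-ineq}, and the term $CB_u|\cos y|$ is dominated using $C\le A$ or $C\le B_u$ as appropriate.

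The step we expect to be the main obstacle is the bookkeeping in the second case. When $r$ is close to $n$, $A=\sin((r-1)\alpha)$ is itself of size $C$, and the cosine factor $E_2$ slightly weakens Lemma~\ref{lem:sin-ineq}. We would handle this by a subcase analysis on the position of $x$ modulo $\pi$, splitting according to whether $B_u\ge\sin(2\alpha)$ or $B_u=\sin\alpha$. The hypothesis $n\ge8$ supplies the numerical slack, for instance $\cos(2\alpha)\ge\sqrt2/2$ as in the proof of Lemma~\ref{lem:sin-ineq}, to absorb the constants in each subcase. The boundary values $r=n$ and $j\in\{0,n\}$ would be checked directly.
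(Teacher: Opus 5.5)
\textbf{There is a genuine gap, and it starts with the statement.} Your degenerate case contains a false step, and no argument can repair it, because the lemma as written fails at $r=n$. For $u=1$ we have $r+1-u=r$, so $3\le r\le n$ does not give $r+1-u\le n-1$.

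Two explicit counterexamples:
\begin{itemize}
\item $r=n$, $u=1$, $j=n-2$. Here $B_1=|\sin 0|=0$ and $D=|\sin(-n\alpha)|=0$, so $2AB_1E_1=0=CD$.
\item $r=n$, $u=2$, $j=n$ (or $j=0$). Here $A=B_2=C=\sin\alpha$, $D=\sin 2\alpha$ and $E_2=\cos\alpha$, so $2AB_2E_2=2\sin^2\alpha\cos\alpha=\sin\alpha\sin2\alpha=CD$.
\end{itemize}
So checking $r=n$ ``directly'' would produce counterexamples, not a verification. The lemma holds for $3\le r<n$, and that is all the paper's proof actually establishes. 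It relies on Lemma~\ref{lem:sin-ineq}, which is stated for $r<n$, and at one point declares an equality ``impossible since $r<n$''. The case $r\equiv0\pmod n$ is excluded in Theorem~\ref{double} anyway.

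\textbf{The main case is not yet an argument, even for $r<n$.} Three problems:
\begin{itemize}
\item The crude-bound remark is wrong. $A\ge C$ and $B_u\ge C$ only give $2AB_uE_u\ge 2C^2E_u$, which is far below $C$.
\item $A$ is of order $\alpha$ not only for $r$ near $n$ but also for small $r$; for example $A=\sin2\alpha$ when $r=3$. So for large $n$ there is a range of $j$, of size proportional to $n$, where $B_u$ is neither large enough for $D\le 1$ to suffice nor within a few multiples of $\alpha$ of $0$ or $\pi$.
\item The two terms of $D\le B_u|\cos y|+|\cos x|\sin y$ cannot be dominated separately. When $B_u=C$, the bound $C|\cos x|\sin y<2ACE_u$ already uses all of $2AB_uE_u$. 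Lemma~\ref{lem:sin-ineq} gives no quantified slack left over to absorb $CB_u|\cos y|$.
\end{itemize}

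\textbf{How to close it.} Keep the two terms together. Divide by $B_u=|\sin x|$ and use $|\cot x|\le\cot\alpha$ for $x\in\alpha\mathbb{Z}\setminus\pi\mathbb{Z}$. This gives $CD/B_u\le\sin\alpha|\cos y|+\cos\alpha\sin y=\sin(y\pm\alpha)$, with $+$ if $\cos y\ge0$ and $-$ otherwise.
\begin{itemize}
\item For $u=1$ ($y=r\alpha$) you need either $\sin((r+1)\alpha)<2\sin((r-1)\alpha)$, which is Lemma~\ref{lem:sin-ineq}, or the trivial $\sin((r-1)\alpha)<2\sin((r-1)\alpha)$.
\item For $u=2$ ($y=(r-1)\alpha$) you need $\sin(r\alpha)$, respectively $\sin((r-2)\alpha)$, to be less than $\sin(r\alpha)+\sin((r-2)\alpha)=2AE_2$. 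This holds precisely because $3\le r<n$; the equality at $r=n$ is the second counterexample above.
\end{itemize}
The degenerate case $B_u=0$ is then fine for $r<n$, since $D=|\sin y|>0$.

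Completed this way, your route is genuinely different from the paper's and shorter. The paper treats $j$ as a real variable and shows, via the derivative $\pm\alpha\sin(\cdot)/\sin^2(\cdot)$, that $D/B_u$ is monotone between consecutive zeros. It then compares the extreme values, at $j=r+u-3\pm1$ and $j\in\{0,n\}$, with $2AE_u/C$ using Lemma~\ref{lem:sin-ineq}.
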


\begin{proof}
We first note a key simplification that is used repeatedly: whenever a sine argument lies in $[0,\pi]$, the sine is positive, and the corresponding $\varepsilon$ equals $1$ and may be omitted.

\medskip
The cases $j=r+u-3$ and $j=2r-2$ are immediate. To analyse the remaining cases, we temporarily replace the integer variable $j$ with a real variable $x$ to study monotonicity properties. We work entirely on the real interval $x\in [0,n]$, so all case distinctions below are understood as intersections with this domain. Define
\[
g_u(x) := \frac{D}{B_u}
= \frac{\varepsilon_D \sin((x-2r+2)\alpha)}{\varepsilon_B \sin((x-(r+u-3))\alpha)}, \quad x \in [0,r+u-4]\cup [r+u-2,n].
\]
Set \[\Omega_u = \frac{2\sin((r-1)\alpha)\cos((u-1)\alpha)}{\sin(\alpha)}.\]
Then $2AB_uE_u = CD$ if and only if  $g_u(j) = \Omega_u.$
It therefore suffices to show that \(g_u(x) < \Omega_u\) for all admissible integers \(x\), which will imply the desired inequality.

\medskip

\textbf{Case 1: $x \in [0,r+u-4]$.} Here $0 < (r+u-3)-x< n$, so the denominator has a fixed sign. However, the numerator may change sign, so the $\varepsilon_{D}$-factor is needed. Hence,
\[
g_u(x)
=
\frac{\varepsilon_D \sin((2r-2-x)\alpha)}
{\sin(((r+u-3)-x)\alpha)}.
\]
Differentiating gives the function
\[
g_u'(x) 
=\frac{\varepsilon_D \alpha \sin((r-(u-1))\alpha)}
{\sin^2((r+u-3-x)\alpha)}.
\]
If $\varepsilon_{D}=1$, then $g_u$ is strictly increasing. Hence, the maximum of the function occurs at $x=r+u-4$, so
\[
 g_u(x)\leq g_u(r+u-4) = \frac{\sin(\alpha(r-u+2))}{\sin(\alpha)}.
\]
It suffices to show that
\[
\sin\bigl(\alpha(r-u+2)\bigr)
<
2\sin\bigl(\alpha(r-1)\bigr)\cos\bigl((u-1)\alpha\bigr).
\]
If $u=1$, then the result follows immediately from Lemma~\ref{lem:sin-ineq}. If $u=2$, we must show that
\[
\sin(\alpha r)
<
2\sin\bigl(\alpha(r-1)\bigr)\cos(\alpha).
\]
By Lemma~\ref{lem:sin-ineq}, it suffices to show that
\(
\cos(\alpha) \ge \sin(\alpha r).
\)
But this holds since $3\leq r < n$, except when $r=n/2$. However, in that case $\sin(\alpha r)=1$ and $2\sin\bigl(\alpha(r-1)\bigr)\cos(\alpha)=2\cos^2(\alpha)$. Since $n \ge 8$, we have $\alpha \le \pi/8$, so $\cos(\alpha) >{\sqrt{2}}/{2}$, and therefore
\(
2\cos^2(\alpha) >  1.
\)
Therefore, the desired inequality follows.

\medskip

Now consider the case $\varepsilon_D = -1$. Then $g_u$ is strictly decreasing, and hence the maximum occurs at $x=0$. Hence,
\[
g_u(0)
=
\frac{-\sin\bigl(\alpha(2r-2)\bigr)}{\sin\bigl(\alpha(r+u-3)\bigr)}.
\]
Using the identity $\sin(2\theta) = 2\sin\theta\cos\theta$ with $\theta = \alpha(r-1)$, we obtain
\[
g_u(0)
=
\frac{-2\sin\bigl(\alpha(r-1)\bigr)\cos\bigl(\alpha(r-1)\bigr)}{\sin\bigl(\alpha(r+u-3)\bigr)}.
\]
Comparing with $\Omega_u$, it suffices to show that
\[
\frac{-\cos\bigl(\alpha(r-1)\bigr)}{\sin\bigl(\alpha(r+u-3)\bigr)}
<
\frac{\cos\bigl((u-1)\alpha\bigr)}{\sin(\alpha)}.
\]
Note that $\cos\bigl(\alpha(r-1)$ is nonpositive, so $n/2 \leq r-1<n-1$.
For the numerators, it easy to deduce that $\cos\bigl((u-1)\alpha\bigr) \ge -\cos\bigl(\alpha(r-1)\bigr),$
with equality only in the case $u=2$ and $r = n$, which is impossible. For the denominators, we claim that
\(
\sin(\alpha) \le \sin\bigl(\alpha(r+u-3)\bigr).
\)
Indeed, since $0 < \alpha \le \pi/8$ and $r \ge 3$, we have
\(
\alpha \leq \alpha(r+u-3)<(n-1)\alpha <\pi,
\)
and equality holds if and only if $\alpha(r+u-3) = \pi - \alpha,$
that is, $r+u-3 = n-1$ which is impossible since $r < n$. Hence, equality cannot occur, and we conclude that
\(
\sin(\alpha) < \sin\bigl(\alpha(r+u-3)\bigr).
\)
Therefore, we have the desired inequality.

\medskip
\textbf{Case 2: $x\in [r+u-2, 2r-3]$.} Here $0 \le x-(r+u-2) \le n$ and $0 \leq (2r-2)-x \leq n$, so both sine arguments lie in $[0,\pi]$ after scaling by $\alpha=\pi/n$, and no sign changes occur, so
\[
g_u(x) = \frac{\sin(\alpha(2r-2-x))}{\sin(\alpha(x-(r+u-3)))}.
\]
Differentiating shows that $g_u$ is strictly decreasing. Hence, the maximum occurs at $x=r+u-2$, and so
\[
g_u(x)\leq g_u(r+u-2) = \frac{\sin(\alpha(r-u))}{\sin(\alpha))}.
\]
By comparing with $K_u$, it suffices to show that
\[
\sin\bigl(\alpha(r-u)\bigr)
<
2\sin\bigl(\alpha(r-1)\bigr)\cos\bigl((u-1)\alpha\bigr).
\]
The inequality is obvious when $u=1$. If $u=2$, we must show that
\[
\sin\bigl(\alpha(r-2)\bigr)
<
2\sin\bigl(\alpha(r-1)\bigr)\cos(\alpha).
\]
By Lemma~\ref{lem:sin-ineq}, it suffices to show that
\(
\cos(\alpha) \ge \sin\bigl(\alpha(r-2)\bigr).
\)
This holds unless $\alpha(r-2) = \pi/2$, that is, $r-2 = n/2$, in which case
\[
\sin\bigl(\alpha(r-2)\bigr) = 1,
\quad
\sin\bigl(\alpha(r-1)\bigr) = \sin\!\left(\frac{\pi}{2} + \alpha\right) = \cos(\alpha),
\]
and hence the right-hand side becomes $2\sin\bigl(\alpha(r-1)\bigr)\cos(\alpha)
=
2\cos^2(\alpha) >  1.
\)
Hence,
\[
\sin\bigl(\alpha(r-2)\bigr) = 1 < 2\cos^2(\alpha),
\]
and so the inequality holds in this case as well.

\medskip
\textbf{Case 3: $x\in [2r-1,n]$.} Here $0 \le x-(r+u-3) \le n$ and $0 \le x-2r+2 \le n$, so both sine arguments lie in $[0,\pi]$ after scaling by $\alpha=\pi/n$, and no sign changes occur. Hence
\[
g_u(x) = \frac{\sin(\alpha(x-2r+2))}{\sin(\alpha(x-(r+u-3)))}.
\]
Then, differentiating shows that $g_u$ is strictly increasing on its domain. Hence, the maximum occurs at the right endpoint, that is
\[
g(x)\leq g(n)
= \frac{\sin(\alpha(n-2r+2))}{\sin(\alpha(n-(r+u-3))}= \frac{2\sin(\alpha(r-1))\cos(\alpha(r-1)}{\sin(\alpha(r+u-3))}.
\]

Since $\cos\bigl(\alpha(r-1)\bigr) > 0$ (otherwise $g(n)<0$), we have that $\alpha(r-1)$ lies in the first quadrant. Hence, 
\[
\cos(\alpha) > \cos\bigl(\alpha(r-1)\bigr),
\quad
\sin\bigl(\alpha(r-1)\bigr) > \sin(\alpha).
\]
Therefore,
\[
\cos\bigl(\alpha(r-1)\bigr)\sin(\alpha)
<
\sin\bigl(\alpha(r-1)\bigr)\cos(\alpha),
\]
and the desired inequality follows.
\end{proof}

\begin{theorem}\label{double}
Let \(n\geq 1\) be an even integer, and let \(k,r\ge 1\) be fixed. If \(P(r,n,k,r-1,1)\) is perfect, then it is trivially perfect if and only if \(P(r,n/2,k,r-1,1)\) is trivially perfect.
\end{theorem}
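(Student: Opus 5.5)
Here $s=r-1$ and $q=1$, so $|r-s|=1$ automatically. Trivial perfectness of $P(r,m,k,r-1,1)$ therefore means $k\equiv 1$ or $k\equiv 2\pmod m$, or $r\equiv 0$ or $r-1\equiv 0\pmod m$. Since every congruence modulo $n$ implies the same congruence modulo $n/2$, the forward direction is immediate. The content is the converse. Assume $P(r,n,k,r-1,1)$ is perfect and $P(r,n/2,k,r-1,1)$ is trivially perfect; we must lift the relevant congruence from $n/2$ to $n$.

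\textbf{Symmetry reduction.} By the involution $(k,r)\mapsto(r+1,k-1)$, which preserves the resultant by \cite[Lemma~2.6]{MR4315549} and hence perfectness, the conditions on $r$ become the conditions on $k$. So it suffices to treat $k\equiv u\pmod{n/2}$ with $u\in\{1,2\}$. If $k\equiv u\pmod n$ we are done. Otherwise $k\equiv u+\tfrac{n}{2}\pmod n$, which is exactly the shifted case $k\equiv(n+2)/2$ or $(n+4)/2$ that Section~\ref{sec:Even} is designed to exclude. We must derive a contradiction in this case.

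\textbf{Small cases.} First handle the degenerate situations. If $r\equiv 0,1\pmod n$, the group is already trivially perfect, so we may reduce $r$ modulo $n$ and assume $2\le r<n$. Since $F$ depends on $r$ only through powers of $\zeta$, we may also take $3\le r$ after treating $r=2$ directly, where $F(t)=1+t-t^{k-1}$ is a trinomial. For small even $n<8$ we check finitely many residues of $(r,k)$ directly by computing $R_n(F)$ or $F(-1)$. For instance, $F(-1)$ must be $\pm1$ by Lemma~\ref{Primitive} with $d=2$, and this parity information already kills many cases.

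\textbf{Main case $n\ge 8$.} By Lemma~\ref{Primitive}, $F(\zeta)$ is a unit in $\mathbb{Z}[\zeta]$ for a primitive $n$th root $\zeta$. Lemma~\ref{OdoniCriterion}, together with Remark~\ref{epsil} since $n$ is even, gives $F(\zeta)=\zeta^jF(\zeta^{-1})$ for some $0\le j<n$. Lemma~\ref{upart} then yields
\[
(1-\zeta)\bigl(\zeta^{r-1}-\zeta^{j-(r-1)}\bigr)=(1-\zeta^{r-1})\bigl(\zeta^{j-(r-2)}-\zeta^{u-1}\bigr)\bigl(1+\zeta^{-(u-1)}\bigr).
\]
Take $\zeta=e^{2\pi i/n}$ and compare absolute values of both sides, using $|1-e^{i\theta}|=2|\sin(\theta/2)|$ and $|1+e^{i\theta}|=2|\cos(\theta/2)|$. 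With $\alpha=\pi/n$:
\begin{itemize}
\item the left side has modulus $4\sin(\alpha)\,|\sin((2r-2-j)\alpha)|=4CD$;
\item the right side has modulus $8\sin((r-1)\alpha)\,|\sin((j-(r+u-3))\alpha)|\,\cos((u-1)\alpha)=8AB_uE_u$.
\end{itemize}
The identity therefore forces $2AB_uE_u=CD$, contradicting Lemma~\ref{noj}. Hence the shifted cases cannot occur, and trivial perfectness lifts from $n/2$ to $n$.

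\textbf{Main obstacle.} The main obstacle is the bookkeeping. The half-angle arguments must be matched exactly with the expressions $A,B_u,C,D,E_u$ of Lemma~\ref{noj}, including the signs absorbed by $\varepsilon$ and the range $0\le j\le n$. Equally delicate is the reduction of the $r$-conditions to $k$-conditions through the involution, since that involution may push $r$ outside the range $3\le r<n$ required by Lemma~\ref{noj}. It will also force a separate treatment of the small cases $n\in\{2,4,6\}$ and $r=2$, which I would handle by direct computation.
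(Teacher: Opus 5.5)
Your proposal is correct and is essentially the paper's proof. You reduce via the involution $(k,r)\mapsto(r+1,k-1)$ to the shifted case $k\equiv u+n/2\pmod n$, obtain $F(\zeta)=\zeta^jF(\zeta^{-1})$ from Lemmas~\ref{Primitive} and~\ref{OdoniCriterion}, take moduli in Lemma~\ref{upart} to reach $2AB_uE_u=CD$, contradict Lemma~\ref{noj}, and settle $r\le 2$ or $n<8$ directly; your explicit reduction of $r$ modulo $n$ to $3\le r<n$ is slightly tidier than the paper's.

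One aside needs correcting. $F(-1)=\pm1$ for all $r,k$, so the $d=2$ test excludes nothing. The small cases are instead ruled out at the primitive $4$th and $6$th roots, where $F(\zeta)$ has norm $5$, $7$ or $13$. For $r=2$ one has $F(\zeta)\in\{2+\zeta,\,1+2\zeta\}$, which is never a unit for $n\ge 4$, since each conjugate has modulus greater than $1$.
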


\begin{proof}
It is enough to prove the doubling implication, that is, if
\(P(r,M,k,r-1,1)\) is trivially perfect, then
\(P(r,2M,k,r-1,1)\) is trivially perfect if it is perfect. The reverse implication,
from \(2M\) to \(M\), is immediate by reducing modulo $M$, and hence the result follows once the doubling step has been established. 

\medskip
We first consider the case $n \ge 8$ and $r \ge 3$.  Assume that \(P(r,M,k,r-1,1)\) is trivially perfect. Using the symmetry
\[
(k,r)\longmapsto (r+1,k-1)
\]
from Section~\ref{sec:Method}, we may assume that neither of the
\(r\)-conditions holds; that is, $r\not\equiv0,1\pmod M.$
Consequently, at least one of the congruences
$k\equiv1,2\pmod M$
holds. Suppose, for contradiction, that
$P(r,2M,k,r-1,1)$
is not trivially perfect. Then,
\[
k\equiv u+aM\pmod{2M},
\]
where $u\in\{1,2\}$ and \(a\) is odd. Let \(\zeta\) be a primitive \(2M\)-th root of unity. Evaluating the
polynomial \(F(t)\) from \eqref{eq2} at \(t=\zeta\), we obtain
\[
F(\zeta)
=
\sum_{i=0}^{r-1}\zeta^i
+
\sum_{i=0}^{r-2}\zeta^{i+u-1}.
\]
By Lemma~\ref{Primitive}, $F(\zeta)$ is a unit in
\(\mathbb Z[\zeta]\) since $P(r,2M,k,r-1,1)$ is perfect by assumption. Therefore, by Lemma~\ref{OdoniCriterion}, there
exists an integer \(j\), with \(1\leq j\leq 2M\), such that
\[
F(\zeta)=\zeta^jF(\zeta^{-1}).
\]

Applying Lemma~\ref{upart}, we obtain
\begin{equation}\label{identity}
(1-\zeta)
\bigl(\zeta^{r-1}-\zeta^{j-(r-1)}\bigr)
=
(1-\zeta^{r-1})
\bigl(\zeta^{j-(r+u-3)}-\zeta^{u-1}\bigr)
(1+\zeta^{-(u-1)}).
\end{equation}
Let $\alpha=\pi/(2M),$ so $\zeta=e^{2i\alpha}.$ Taking absolute values in \eqref{identity}  we obtain
\begin{equation}\label{identity2}
|1-\zeta|\,
|\zeta^{r-1}-\zeta^{j-(r-1)}|
=
|1-\zeta^{r-1}|\,
|\zeta^{j-(r+u-3)}-\zeta^{u-1}|\,
|1+\zeta^{-(u-1)}|.
\end{equation}

Using the notation introduced in Lemma~\ref{noj} together with the identity
\[
|e^{2ia\alpha}-e^{2ib\alpha}|
=
2|\sin((a-b)\alpha)|,
\]
we obtain
\[
|1-\zeta|=2C,
\quad
|1-\zeta^{r-1}|=2A,\quad
|\zeta^{j-(r+u-3)}-\zeta^{u-1}|=2B_u,
~\text{and}~
|\zeta^{r-1}-\zeta^{j-(r-1)}|=2D.
\]
Moreover,
\[
|1+\zeta^{-(u-1)}|
=
2\cos((u-1)\alpha)
=
2E_u .
\]
Substituting into \eqref{identity2} gives
\[
(2C)(2D)=(2A)(2B_u)(2E_u),
\]
and hence
\begin{equation}\label{feqn}
2AB_uE_u=CD .
\end{equation}
However, Lemma~\ref{noj} asserts that for \(u\in\{1,2\}\) there is no integer \(j\) satisfying \eqref{feqn}. This contradiction shows that
\(P(r,2M,k,r-1,1)\) must be trivially perfect. 

\medskip
It remains to consider the case $r\leq 2$ or $n<8$. In both cases, it is easy to see that \(P(r,n,k,r-1,1)\) is perfect if and only if it is trivially perfect.

\medskip
Hence, if \(P(r,n,k,r-1,1)\) is perfect, then it is trivially perfect if and only if \(P(r,n/2,k,r-1,1)\) is trivially perfect.
\end{proof}
\begin{cor}\label{lcor}
Let \(n=2^a3^b\), where \(b\in\{0,1\}\) and \(a\geq 0\). Then
\(P(r,n,k,r-1,1)\) is perfect if and only if it is trivially perfect.
\end{cor}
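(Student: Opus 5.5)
Only one direction needs work: trivially perfect groups are perfect by the discussion after Definition~\ref{def:tperf}. So assume \(P(r,n,k,r-1,1)\) is perfect with \(n=2^a3^b\). I will argue by induction on \(a\), using Theorem~\ref{double} to pass from \(n\) to \(n/2\). For \(n/2\) to inherit perfectness, I use Lemma~\ref{Primitive}: unimodularity of the \(n\times n\) circulant means \(F(\zeta_d)\) is a unit for every \(d\mid n\). Since every divisor of \(n/2\) also divides \(n\), the same holds for \(n/2\), so \(P(r,n/2,k,r-1,1)\) is perfect.

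Induction step. Let \(a\ge1\). By the inductive hypothesis, \(P(r,n/2,k,r-1,1)\) is perfect, hence trivially perfect. Theorem~\ref{double} applies because \(P(r,n,k,r-1,1)\) is perfect, and it shows that \(P(r,n,k,r-1,1)\) is trivially perfect.

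Base cases are \(n=1\) and \(n=3\), with \(a=0\).
\begin{itemize}
\item For \(n=1\), every congruence modulo \(1\) holds, and \(|r-s|=1\), so the group is trivially perfect.
\item For \(n=3\), I check directly. Modulo \(3\) the four conditions read \(k\equiv1\), \(k\equiv2\), \(r\equiv0\), \(r-1\equiv0\). If all fail, then \(k\equiv0\) and \(r\equiv2\pmod 3\). Reducing exponents modulo \(3\), the circulant polynomial is congruent to \(1+t-t^{2}\cdot 1 = 1+t-t^2\) (plus a multiple of \(1+t+t^2\) when the sums wrap around).
\end{itemize}

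The cleanest way to run the \(n=3\) check is to compute \(F(1)\) and \(F(\omega)\), where \(\omega\) is a primitive cube root of unity.
\begin{itemize}
\item \(F(1)=r-(r-1)=1\) always.
\item Write \(\sum_{i=0}^{r-1}\omega^i\) for \(r\equiv2\) as \(1+\omega\). Likewise \(\sum_{i=0}^{r-2}\omega^i=1\), and \(\omega^{k-1}=\omega^{2}\).
\item Hence \(F(\omega)=1+\omega-\omega^2=2+2\omega\), using \(\omega^2=-1-\omega\).
\item Its norm is \(|2+2\omega|^2=4|{-\omega^2}|^2=4\ne1\).
\end{itemize}
So by Lemma~\ref{Primitive} the group is not perfect, a contradiction. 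Thus for \(n=3\), perfect implies trivially perfect.

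The main point needing care is that Theorem~\ref{double} internally relies on Lemma~\ref{noj}, which requires \(n\ge8\) and \(r\ge3\). Its proof dismisses the cases \(n<8\) or \(r\le2\) as easy. I would verify these small cases explicitly to be safe. For \(r\le2\): \(r=1\) is trivially perfect, and \(r=2\) gives \(F=1+t-t^{k-1}\). For the small even \(n\in\{2,4,6\}\) I would use a quick finite check of \(F(\zeta_d)\) against units. Given those, the induction closes and the corollary follows.
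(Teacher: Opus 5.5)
Your proof is correct and follows the paper's route: reduce to the odd part \(3^b\), show that for \(n=3\) the only non-trivially-perfect residues \(k\equiv0\), \(r\equiv2\pmod 3\) give a non-perfect group, and then lift through the powers of \(2\) using Theorem~\ref{double}. The only difference is that you verify the \(n=3\) case directly by computing \(F(\omega)=2+2\omega\), which has norm \(4\), whereas the paper identifies this group with \(H(r,3,r-1)\) and cites \cite[Theorem~A]{MR4315549}; your extra small-case checks are not needed, since Theorem~\ref{double} is stated for all even \(n\).
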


\begin{proof}
Assume first that \(P(r,n,k,r-1,1)\) is not trivially perfect. 
By considering the odd part of \(n\), it suffices to examine the case 
\(P(r,3^b,k,r-1,1)\) with
\(b=1\), \(k\equiv 0\pmod{3}\), and \(r\equiv 2\pmod{3}\). Hence,
\[
P(r,3^b,k,r-1,1)=H(r,3,r-1),
\]
which is not perfect by \cite[Theorem~A]{MR4315549}. Therefore, \(P(r,3^b,k,r-1,1)\) is perfect if and only if it is trivially perfect. The result now follows
by applying Theorem~\ref{double}.
\end{proof}

\section{Proof of main results}\label{sec:Main}
This section is devoted to proving the main results. But first, we establish a reduction procedure together with a symmetry principle that allows all arguments to be carried out in a normalised setting and then transferred back to the general case.

\medskip
\subsection{Normalisation and arithmetic lifting}\label{normal}
Throughout this section, denote by \(d\) the common divisor of
\(\gcd(n,q)\) and \((k-1)\), and write
$n=dN,\ q=dQ,\ \text{and}\ k-1=d(K'-1)$ for some $K'$. Assume that $Q$ is invertible modulo $N$, and define
\[
K=\widehat Q(K'-1)+1,
\]
where \(Q\widehat Q\equiv1\pmod N\). By \cite[Lemma~4.1]{MR2946300}, the parameter transformation
\[
(r,k,s)\longmapsto(s,n-k+2,r)
\]
induces an isomorphism
\[
P(r,n,k,s,q)\cong P(s,n,n-k+2,r,q).
\]
Hence, all instances in which we have assumed $r\geq s$ are without loss of generality. Moreover, if $P(r,n,k,s,q)$ is
perfect, then \(|r-s|=1\) by \cite[Lemma~2.1]{MR4315549}. After applying
the above symmetry if necessary, we restrict throughout to
$s=r-1.$ 

\medskip
In the important special case \(d=\gcd(n,q)\), which is the case whenever the group is perfect by \cite[Lemma~2.3]{MR4315549}, the decomposition result of \cite{MR1961572} gives
\begin{equation}\label{decom}
  P(r,n,k,s,q)\cong
*^{d}P(r,N,K,r-1,1).  
\end{equation}
Hence, the general case reduces to the study of the normalised
case $P(r,N,K,r-1,1)$.

\medskip
The following proposition records the relation between the arithmetic
conditions for the original parameters and those for the reduced
parameters.

\begin{lem}\label{al}
In the case \(d=\gcd(n,q)\) and $s=r-1$, the following equivalences hold:
\[
\begin{aligned}
K&\equiv1\pmod N
&&\Longleftrightarrow&
k&\equiv1\pmod n,\\
K&\equiv2\pmod N
&&\Longleftrightarrow&
k&\equiv1+q\pmod n,\\
2(K-1)&\equiv1\pmod N
&&\Longleftrightarrow&
2(k-1)&\equiv q\pmod n,\\
2r&\equiv1\pmod N
&&\Longleftrightarrow&
q(r+s)&\equiv0\pmod n,\\
\gcd(K-1-r,N)&=1
&&\Longleftrightarrow&
\gcd(k-1-qr,n)&=d.
\end{aligned}
\]
\end{lem}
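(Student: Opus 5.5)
The plan is to derive all five equivalences from two basic facts about the reduction. First, $n=dN$, $q=dQ$, $k-1=d(K'-1)$, and for an integer $m$ we have $dm\equiv0\pmod n$ if and only if $m\equiv0\pmod N$. Second, since $d=\gcd(n,q)$, we have $\gcd(Q,N)=1$, so $Q$ is a unit modulo $N$ and $K-1\equiv\widehat Q(K'-1)\pmod N$. Each condition on the original parameters is linear in $k-1$ and $q$ modulo $n$. So the strategy is: divide the congruence by $d$ to get a congruence modulo $N$ in $K'-1$ and $Q$, then multiply by $\widehat Q$ to turn $K'-1$ into $K-1$ and $Q$ into $1$.

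The congruence equivalences are then direct.
\begin{itemize}
\item $k\equiv1\pmod n$ means $d(K'-1)\equiv0\pmod n$, i.e.\ $K'-1\equiv0\pmod N$, i.e.\ $K-1\equiv0\pmod N$.
\item $k\equiv1+q\pmod n$ means $d(K'-1-Q)\equiv0\pmod n$, i.e.\ $K'-1\equiv Q\pmod N$. Multiplying by $\widehat Q$ gives $K-1\equiv1\pmod N$, i.e.\ $K\equiv2\pmod N$.
\item $2(k-1)\equiv q\pmod n$ becomes $2(K'-1)\equiv Q\pmod N$, which is equivalent to $2(K-1)\equiv1\pmod N$.
\item With $s=r-1$, the condition $q(r+s)=dQ(2r-1)\equiv0\pmod n$ is equivalent to $Q(2r-1)\equiv0\pmod N$. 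Since $Q$ is a unit, this is $2r\equiv1\pmod N$.
\end{itemize}
Each step is reversible because multiplication by $d$ (from modulus $N$ to modulus $n$) and multiplication by the unit $Q$ or $\widehat Q$ are both reversible.

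The gcd statement is the only step needing care. Write $k-1-qr=d(K'-1-Qr)$, so $\gcd(k-1-qr,n)=d\cdot\gcd(K'-1-Qr,N)$. Then $\gcd(k-1-qr,n)=d$ if and only if $\gcd(K'-1-Qr,N)=1$. Next, $K'-1-Qr\equiv Q(K-1-r)\pmod N$, because $Q(K-1)\equiv Q\widehat Q(K'-1)\equiv K'-1$. Since $\gcd(Q,N)=1$ and the gcd with $N$ depends only on the residue modulo $N$,
\[
\gcd(K'-1-Qr,N)=\gcd\bigl(Q(K-1-r),N\bigr)=\gcd(K-1-r,N).
\]
This gives the last equivalence.

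The main thing to guard against is in the first bullet: $K$ is defined only modulo $N$ through $\widehat Q$, so $K'-1\equiv0$ and $K-1\equiv0$ must be compared modulo $N$ using invertibility, not as integers. Otherwise the argument is routine bookkeeping. I expect no real obstacle beyond consistently tracking the fact that $\gcd(Q,N)=1$ follows from $d=\gcd(n,q)$.
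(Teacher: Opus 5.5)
Your proposal is correct and follows essentially the same route as the paper: factor out $d$ to pass from modulus $n$ to modulus $N$, then use that $Q$ (equivalently $\widehat Q$) is a unit modulo $N$, with the gcd equivalence obtained exactly as in the paper via $k-1-qr=d(K'-1-Qr)$. You spell out the four congruence equivalences that the paper only asserts "follow directly". Your observation that $\gcd(Q,N)=1$ is automatic when $d=\gcd(n,q)$ correctly justifies the invertibility the paper assumes.
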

\begin{proof}
The first four equivalences follow directly from 
\[
n=dN,\qquad q=dQ,\qquad k-1=d(K'-1),
\]
together with the definition \(K=\widehat Q(K'-1)+1\) and the congruence
\(Q\widehat Q\equiv1\pmod N\).

For the final equivalence, observe that
\[
k-1-qr=d((K'-1)-Qr).
\]
Since multiplication by the unit \(\widehat Q\) modulo \(N\) does not affect the
greatest common divisor with \(N\), we obtain
\[
\gcd(k-1-qr,n)
=
d\,\gcd((K'-1)-Qr,N)
=
d\,\gcd(K-1-r,N),
\]
which proves the desired equivalence.
\end{proof}


We next record a consequence of the parameter involution.

\begin{prop}\label{tperfect}
Suppose that \(d=\gcd(n,q)\). If \(dr\equiv0\pmod n\) or
\(d(r-1)\equiv0\pmod n\), then \(P(r,N,K,r-1,1)\) is perfect.
\end{prop}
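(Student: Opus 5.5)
The plan is to compute the determinant of the relation matrix of $P(r,N,K,r-1,1)^{\mathrm{ab}}$ directly, using the resultant formula from Section~\ref{sec:Method}, and to show that it equals $\pm1$. First, translate the hypotheses. Since $n=dN$, the condition $dr\equiv0\pmod n$ is equivalent to $N\mid r$, that is, $r\equiv0\pmod N$. Likewise, $d(r-1)\equiv0\pmod n$ is equivalent to $r\equiv1\pmod N$.

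The relation matrix of the normalised group has associated polynomial $F(t)$ from \eqref{eq2} with parameters $(r,K)$. Reducing $F$ modulo $t^N-1$ does not change the circulant matrix, so $\det(C)=R_N(F)=\prod_{i=0}^{N-1}F(\zeta^i)$ with $\zeta$ a primitive $N$th root of unity. I would evaluate each factor separately.

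\textbf{The factor at $t=1$.} Here $F(1)=r-(r-1)=1$ in both cases.

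\textbf{Case $r\equiv0\pmod N$.} For $\omega=\zeta^i\neq1$, the geometric sum gives $\sum_{i=0}^{r-1}\omega^i=(1-\omega^r)/(1-\omega)=0$. Also $\sum_{i=0}^{r-2}\omega^i=-\omega^{r-1}=-\omega^{-1}$. Hence $F(\omega)=\omega^{K-1}\omega^{-1}=\omega^{K-2}$, which is a root of unity.

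\textbf{Case $r\equiv1\pmod N$.} Now $\sum_{i=0}^{r-1}\omega^i=(1-\omega)/(1-\omega)=1$ and $\sum_{i=0}^{r-2}\omega^i=0$, so $F(\omega)=1$. Alternatively, this case follows from the first via the involution $(k,r)\mapsto(r+1,k-1)$, whose resultants agree by \cite[Lemma~2.6]{MR4315549}. A direct computation is simpler, however.

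Combining the factors, $R_N(F)$ is a product of roots of unity and hence has absolute value $1$. It is also the determinant of an integer matrix, so it is an integer. Therefore $R_N(F)=\pm1$, the circulant matrix is unimodular, and $P(r,N,K,r-1,1)$ is perfect.

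Equivalently, one could argue through Lemma~\ref{Primitive}: every $F(\zeta_e)$ with $e\mid N$ is a root of unity, hence a unit in $\mathbb Z[\zeta_e]$.

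There is no serious obstacle here. The only point needing care is the degenerate situation $N=1$. In that case only the factor $F(1)=1$ appears, and the conclusion still holds.
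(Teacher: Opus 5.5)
Your proof is correct, but it takes a different route from the paper. The paper never evaluates $F$. It quotes \cite[Theorem~17(b)]{MR2946300} to get that $P(r,N,K,r-1,1)$ is trivial, hence perfect, when $K\equiv1,2\pmod N$. It then transports this to the conditions $r\equiv0,1\pmod N$ via the involution $(K,r)\mapsto(r+1,K-1)$, which preserves perfectness by \cite[Lemma~2.6]{MR4315549}. Its final translation $r\equiv0,1\pmod N\Leftrightarrow dr\equiv0,\ d(r-1)\equiv0\pmod n$ is the same as yours.

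Your direct evaluation avoids both citations. You show $F(1)=1$, and that $F(\omega)=\omega^{K-2}$ or $F(\omega)=1$ at the nontrivial $N$th roots of unity. In particular you bypass the much stronger triviality theorem, of which the paper uses only the perfectness consequence. The same computation also covers the $K$-conditions: modulo $t^N-1$ one has $F\equiv t^{r-1}$ when $K\equiv1$ and $F\equiv1$ when $K\equiv2$. So your method verifies perfectness of every trivially perfect normalised group uniformly and self-containedly. The paper's argument, in turn, is shorter on the page and reuses the symmetry framework it relies on elsewhere (e.g.\ in Lemma~\ref{mainlem} and Theorem~\ref{double}).

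One small correction to your aside: the involution does not carry the case $r\equiv1$ to the case $r\equiv0$. Since the new $K$ is $r+1$, it sends $r\equiv0,1\pmod N$ to $K\equiv1,2\pmod N$ respectively, which is exactly how the paper uses it. This does not affect your proof, which rests on the direct computation.
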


\begin{proof}
By \cite[Lemma~2.6]{MR4315549}, the involution \((K,r)\mapsto(r+1,K-1)\)
preserves perfectness. Moreover, by \cite[Theorem~17(b)]{MR2946300},
if \(K\equiv1,2\pmod N\), then \(P(r,N,K,r-1,1)\) is trivial, and hence
perfect. Applying the involution, it follows that \(P(r,N,K,r-1,1)\) is
also perfect whenever \(r\equiv0,1\pmod N\), respectively. The result follows since
\(r\equiv0\pmod N\) and \(r\equiv1\pmod N\) are equivalent to
\(dr\equiv0\pmod n\) and \(d(r-1)\equiv0\pmod n\), respectively.
\end{proof}
\subsection{Proofs}

We now prove the main results, beginning with Theorem~\ref{main}.

\medskip
Let \(P(r,n,k,s,q)\) be perfect but not trivially perfect. Since \(P(r,n,k,s,q)\) is perfect, \cite[Lemma~2.1]{MR4315549} implies that \(|r-s|=1\). As we have assumed \(r\geq s\), it follows that \(s=r-1\). We may therefore apply the normalisation and arithmetic lifting reduction established in Section~\ref{normal}. In particular, perfectness is determined by the corresponding normalised case \(P(r, N, K,r-1,1)\), with $d$, $N$, and $K $ as defined there. Hence, throughout the proofs below, we may work with this reduced case.

\medskip
We now prove Theorem~\ref{main}.
\begin{proof}[Proof of Theorem~\ref{main}]

Let \(\zeta\) be a primitive \(N\)-th root of unity. Since \(P(r,N,K,r-1,1)\) is perfect, Lemma~\ref{Primitive} implies that \(F(\zeta)\) is a unit in \(\mathbb{Z}[\zeta]\), where \(F(t)\) denotes the polynomial obtained from the original \(F(t)\) by replacing \(k\) with \(K\). Therefore, by Lemma~\ref{OdoniCriterion}, there exist an integer \(j\), with \(0\leq j\leq N\), and \(\epsilon\in\{\pm1\}\) such that
\[
\epsilon\zeta^jF(\zeta)=F(\zeta^{-1}).
\]
Lemma~\ref{mainlem} now implies that \(P(r,N,K,r-1,1)\) is of type \(\widetilde{\mathfrak{Z}}\). Hence, Lemma~\ref{al} shows that \(P(r,n,k,s,q)\) is also of type \(\widetilde{\mathfrak{Z}}\), contradicting our assumption. 
\end{proof}
We next deduce Corollary~\ref{maincor}.
\begin{proof}[Proof of Corollary~\ref{maincor}]
Let \(G_n(w)\) and \(G_n(w')\) be as in the statement, with the same exponent sum for each generator. Then \(G_n(w)\) is perfect if and only if \(G_n(w')\) is perfect, and hence it suffices to determine the perfectness of
\[
P(r,n,k,s,q)=G_n(w').
\]

\medskip
Assume first that \(G_n(w')\) is perfect. By \cite[Lemma~2.1]{MR4315549}, we must have \(|r-s|=1\), and since \(r\geq s\), it follows that \(s=r-1\). Applying Theorem~\ref{main}, we conclude that either \(G_n(w')\) is trivially perfect or it is of type \(\widetilde{\mathfrak{Z}}\). In the latter case, Theorem~\ref{ThmC} together with Lemma~\ref{al} gives
\[
\gcd(k-1-qr,n)=\gcd(n,q).
\]
\medskip
Conversely, assume that \(|r-s|=1\), so that \(s=r-1\) since \(r\geq s\). If \(G_n(w')\) is trivially perfect, then it is perfect by \cite[Theorem~17(b)]{MR2946300} and Proposition~\ref{tperfect}. Otherwise, assume that \(G_n(w)\) is of type \(\widetilde{\mathfrak{Z}}\) and that $\gcd(k-1-qr,n)=\gcd(n,q).$
Then, Theorem~\ref{ThmC} implies that \(G_n(w)\) is perfect.
\end{proof}
We now prove Theorem~\ref{main2}.

\begin{proof}[Proof of Theorem~\ref{main2}]
Since \(M\mid n\), the perfectness of \(P(r,n,k,s,q)\) implies that \(P(r,M,k,s,q)\) is also perfect. Moreover, by \eqref{decom}, there exist integers \(M'\) and \(k'\) such that \(P(r,M,k,s,q)\) is the free product of \(\gcd(M,q)\) copies of \(P(r,M',k',r-1,1)\). If \(P(r,M',k',r-1,1)\) is trivially perfect, then Lemma~\ref{al} implies that \(P(r,M,k,s,q)\) is trivially perfect, and hence Theorem~\ref{double} implies that \(P(r,n,k,s,q)\) is also trivially perfect, contrary to assumption. Therefore, \(P(r,M',k',r-1,1)\) is not trivially perfect, and so Theorem~\ref{main} implies that \(P(r,M,k,s,q)\) is of type \(\widetilde{\mathfrak Z}\).
\end{proof}
We next deduce Corollary~\ref{main2cor}.

\begin{proof}[Proof of Corollary~\ref{main2cor}]
By Corollary~\ref{lcor}, the result holds for \(P(r,N,K,r-1,1)\). It therefore follows from Lemma~\ref{decom} and \eqref{al} that the result also holds for \(P(r,n,k,s,q)\).
\end{proof}

\section*{Acknowledgements}
The first author acknowledges financial support from Southern Illinois University and the Simons Foundation during the preparation of this article. The second author was supported by the University of Pretoria Research Development Programme during the completion of this work.
\bibliographystyle{plain}
\bibliography{Refs}

\end{document}